\theoremstyle{plain}
\newtheorem{theorem}{Theorem}
\newtheorem{corollary}{Corollary}
\newtheorem{lemma}{Lemma}
\newtheorem{proposition}{Proposition}
\theoremstyle{remark}
\newtheorem{definition}{Definition}
\newtheorem{assumption}{Assumption}
\theoremstyle{remark}
\newtheorem{remark}{Remark} 
\newcommand{\bl}{}
\DeclarePairedDelimiter\abs{\lvert}{\rvert}%
\DeclarePairedDelimiter\norm{\lVert}{\rVert}%
\let\oldabs\abs
\def\abs{\@ifstar{\oldabs}{\oldabs*}}
\let\oldnorm\norm
\def\norm{\@ifstar{\oldnorm}{\oldnorm*}}
\newcommand{\R}{\mathbb{R}}
\newcommand{\N}{\mathbb{N}}
\newcommand{\ud}{\mathrm{d}}
\newcommand{\E}{\mathbb{E}}
\newcommand{\PP}{\mathbb{P}}
\newcommand{\1}{\mathbbm{1}} %requires package bbm
\newcommand\xpm{[x^-,x^+]}
\DeclareMathOperator*{\essinf}{\mathrm{ess inf}}
\begin{document}

\begin{frontmatter}
%%%%%%%%%%%%%%%%%%%%%%%%%%%%%%%%%%%%%%%%%%%%%%
%%                                          %%
%% Enter the title of your article here     %%
%%                                          %%
%%%%%%%%%%%%%%%%%%%%%%%%%%%%%%%%%%%%%%%%%%%%%%
\title{Optimal stopping with nonlinear expectation: geometric and algorithmic solutions}
%\title{A sample article title with some additional note\thanksref{T1}}
\runtitle{Optimal stopping with nonlinear expectation}
%\thankstext{T1}{A sample of additional note to the title.}

\begin{aug}
%%%%%%%%%%%%%%%%%%%%%%%%%%%%%%%%%%%%%%%%%%%%%%%
%% Only one address is permitted per author. %%
%% Only division, organization and e-mail is %%
%% included in the address.                  %%
%% Additional information can be included in %%
%% the Acknowledgments section if necessary. %%
%% ORCID can be inserted by command:         %%
%% \orcid{0000-0000-0000-0000}               %%
%%%%%%%%%%%%%%%%%%%%%%%%%%%%%%%%%%%%%%%%%%%%%%%
\author[A]{\fnms{Tomasz}~\snm{Kosmala}\ead[label=e1]{t.kosmala@imperial.ac.uk}\orcid{0000-0001-5450-5002}},
%\author[B]{\fnms{???}~\snm{???}\ead[label=e2]{???@???}}
\and
\author[B]{\fnms{John}~\snm{Moriarty}\ead[label=e2]{j.moriarty@qmul.ac.uk}\orcid{0000-0002-4520-4133}}
%%%%%%%%%%%%%%%%%%%%%%%%%%%%%%%%%%%%%%%%%%%%%%
%% Addresses                                %%
%%%%%%%%%%%%%%%%%%%%%%%%%%%%%%%%%%%%%%%%%%%%%%
\address[A]{Department of Mathematics, Imperial College London and School of Mathematical Sciences, Queen Mary University of London\printead[presep={,\ }]{e1}}
\address[B]{School of Mathematical Sciences, Queen Mary University of London\printead[presep={,\ }]{e2}}

%\address[B]{???\printead[presep={,\ }]{e2,e3}}
\end{aug}

\begin{abstract}
We use the geometry of suitably generalised potentials 
to solve risk-sensitive Markovian optimal stopping problems. As in the linear case due to Dynkin and Yushkievich (1967), the value function is the \bl{pointwise infimum} of those functions which dominate the gain function. An emphasis is placed on \bl{geometric} and pathwise arguments, rather than exploiting convexity, positive homogeneity or related analytical properties. An algorithm is provided to construct the value function at the computational cost of a two-dimensional search.
\end{abstract}

\begin{keyword}[class=MSC]
\kwd[Primary ]{60G40}
\kwd[; secondary ]{91B08}
\kwd{91B06}
\kwd{60J25}
\end{keyword}

\begin{keyword}
\kwd{Optimal stopping}
\kwd{nonlinear expectation}
\kwd{risk measure}
\kwd{constructive solution}
\kwd{Markov property}
\end{keyword}

\end{frontmatter}

Convexity and associated properties have been used as main tools in the global solution of both stochastic and deterministic optimisation problems. This paper concerns maximising a Markovian performance criterion when stopping a diffusion,
where the relationship between concavity and excessivity has been used since the seminal work of Dynkin \cite{dynkin1963optimum} (see also \cite{Dayanik_Karatzas}). However this concavity arises from the linearity of the expectation operator, and there is increasing recent interest in the use of nonlinear, including nonconcave, expectations. In this paper we \bl{study the lower envelope (that is, the pointwise infimum) of suitably generalised potentials dominating the gain function (Figure \ref{fig:1}),}    
to obtain constructive formulae for the value function and optimal stopping policy in the Markovian context. The formulae are thus analogues to those of Dynkin and Yushkievich \cite{Dynkin_Yushkevich} for the linear case.

Interest in such nonlinear criteria has arisen in numerous domains. A few examples include the theory of preferences, see for example \cite[Ch. 2]{Follmer_Schied} and \cite{Wang_Xu}, optimisation under ambiguity \cite{chen2002ambiguity, Cheng_Riedel, Christensen_2013}, and even reinforcement learning \cite{Huang_Haskell, lam2023riskaware}. The celebrated {\it prospect theory} of Kahneman and Tversky \cite{kahneman1979prospect} describes how the same agent can switch between risk averse (concave) and risk seeking (convex) criteria, depending on whether losses or gains are considered. This helps to motivate methods based on other properties.

In the absence of convexity or related global analytical properties it is natural to seek local solutions. Accordingly we study the \bl{lower envelope} 
of a constrained set of generalised potentials to solve the problem locally. A pathwise argument then confirms this local solution to be optimal. 

Our main result is that, under sufficient regularity, the value of the nonlinear optimal stopping problem is the pointwise infimum over those functions in $\mathcal{H}$ which dominate the gain function determining the reward or payoff. It is optimal to stop the first time this value function coincides with the gain function. Thus the value function and optimal stopping region are constructed from the nonconvex geometry of the generalised potentials and gain function. 
In the linear case, this result was obtained for the Wiener process in \cite[Ch.\ III.7]{Dynkin_Yushkevich}, and extended to one-dimensional regular diffusion processes (with exponential discounting of the reward) in \cite{Dayanik_Karatzas} by appropriately generalising the notion of concavity. For $\kappa$-ambiguity, which is a positively homogeneous expectation, a result of this type was obtained in \cite{Christensen_2013} using ideas from \cite{beibel1997new}. 

A common solution technique in optimal stopping problems is to generate a  heuristic solution and then apply a verification argument (see for example \cite[Th. 3.4]{Cheng_Riedel}). Although the assumptions on the nonlinear expectation are strong (see for example \cite{kupper2009representation}), in Section \ref{sec:motiv} we confirm that the solution is correct for the worst-case risk mapping, which is less regular. If used as a heuristic then, as illustrated by Figure \ref{fig:4}, the construction differs from the well-known principle of smooth fit or smooth pasting (see for example \cite{Peskir2006}). As a corollary, no smooth transformation of Figure \ref{fig:4} could reduce the solution to the classical one of \cite[Ch.\ III.7]{Dynkin_Yushkevich}.

To the best of our knowledge the method of proof is different to existing approaches using linearity, generalised concavity, positive homogeneity or related analytical properties, and is novel even in the linear case. This may be of independent interest, particularly as few explicit solutions are known for optimal stopping of Brownian motion in dimension greater than one \cite{christensen2019optimal}, although the topic is outside the scope of this paper. 

A simple algorithmic implementation of the construction returns the value function and optimal stopping region, at the cost of a two-dimensional nonconvex optimisation. The algorithm is presented under regularity assumptions which may be compared to those made in \cite{Crocce_Mordecki}, where the authors use analytic methods to provide an algorithmic solution in the linear case.

\section{Optimal stopping with nonlinear expectation}

\begin{figure}
    \centering
        \resizebox{10cm}{!}{
\begin{tikzpicture}
\node[anchor=south west,inner sep=0] (image) at (0,0) {\includegraphics{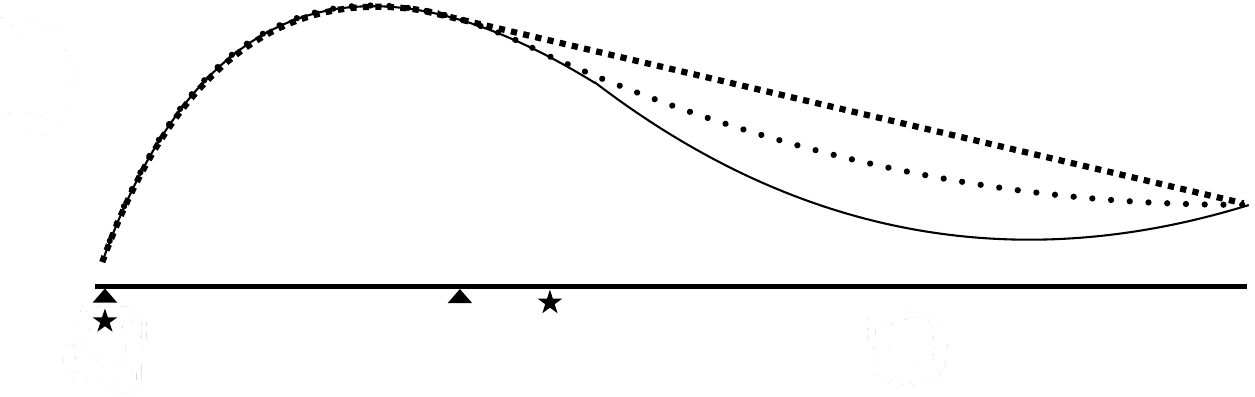}};
\begin{scope}[x={(image.south east)},y={(image.north west)}]
 \node[scale=2] at (0.082,0.072) {$0$};
 \node[scale=2] at (0.992,0.072) {$1$};
 \node[scale=2] at (0.92,0.38) {$g$};
 \node[scale=2] at (0.73,0.5) {$V^\text{nlin}$};
 \node[scale=2] at (0.6,0.86) {$V^\text{lin}$};
\end{scope}
\end{tikzpicture}
}
    \caption{Solutions for an example gain function $g$ (solid curve) when $X^x$ is Brownian motion. The value function $V^\text{lin}$ in the linear case $\rho=\E$ is the smallest concave majorant of $g$ (square markers). The value function $V^\text{nlin}$ for an indicative nonlinear $\rho$ is a nonconcave majorant of $g$ (circular markers). The optimal stopping regions are respectively the intervals marked by triangles and stars. In the differentiable case, the value and gain functions meet smoothly (at the rightmost triangles and stars respectively, see Corollary \ref{cor:sf}); for a non-smooth case see Figure \ref{fig:4}.}
    \label{fig:1}
\end{figure}

\label{sec:statement}
The nonlinear optimal stopping problem we consider is
\begin{equation}\label{optimal_stoping_intro}
    V(x):=\sup_{\tau\in\mathscr{T}} \rho(g(X^x_\tau)),
\end{equation}
where $\rho$ is a nonlinear expectation (more precisely a {\it risk mapping} with a reference probability measure $\PP$, as defined in Section \ref{subsec:risk_mappings} below), $g \colon [0,1] \to \R_+$ is a {\it gain function}, $X^x$ is a sufficiently regular diffusion process which starts at $x \in [0,1]$ and is absorbed on $\{0, 1\}$, $\mathscr{T}$ is the set of $\PP$-almost surely finite stopping times, and $V$ is the {\it value function}. 
Our approach is via the elementary geometry of functions such as $x \mapsto h^{y,z}_{\beta,\gamma}(x)$, where 
\begin{align*}
h^{y,z}_{\beta,\gamma}(x) := 
\begin{cases}
    \rho(\beta \1_{\{\tau_y^x < \tau_z^x\}} + \gamma \1_{\{\tau_y^x > \tau_z^x\}}), &  0 \leq y < x < z \leq 1, \\
    \infty, & \text{ otherwise, }
\end{cases}
\end{align*}
and $\tau_a^x$ denotes the first hitting time by $X^x$ of the point $a \in [0,1]$. (Where no confusion arises, we may omit the superscript $x$ and write $\tau_a$.) By analogy with the case of linear expectation, where they are solutions to the Dirichlet problem,  we refer to the functions $h^{y,z}_{\beta,\gamma}$ as {\it generalised potentials}. In Section \ref{sec:2} we obtain the solution
\begin{align}\label{eq:guessv}
V(x) = \inf \{ h(x): h \in \mathcal{H}, \; h \geq g \text{ on } [0,1]\},
\end{align}
where the set $\mathcal{H}$ is given by
\begin{multline}\label{eq:defcalh}
    \mathcal{H} =  \Big\{  h_{\beta,\gamma}^{y,z} : \left((y=0, z=1
    ) \text{ or }
        (y \in (0,1), \beta = \bar g 
        \text{ and } z = 1) \right. \\ \left. \text{ or }
        (y = 0, z \in (0,1) \text{ and } \gamma = \bar g 
        )\right)
        \Big\}, 
    \end{multline}
where $\bar g = \max_{x \in [0,1]} g(x)$. In the classical case \cite[Ch.\ III.7]{Dynkin_Yushkevich} (Figure \ref{fig:1}, square markers), the expectation is $\rho = \E$ and the functions $h^{y,z}_{\beta,\gamma}$ are linear. In the present setting (Figure \ref{fig:1}, circular markers), the functions $h^{y,z}_{\beta,\gamma}$ are nonlinear and defined to be finite only on a subinterval of $[0,1]$. This choice for $\mathcal{H}$ is motivated by the example in Section \ref{sec:motiv}.

\begin{remark} \label{rem:constr}
Definition \eqref{eq:defcalh} is used for simplicity. In fact it is sufficient to use a set of functions smaller than $\mathcal{H}$ in the construction, by adding to \eqref{eq:defcalh} the constraint that $\beta \vee \gamma := \max\{\beta, \gamma\} \leq \bar g$, since the infimum in \eqref{eq:guessv} then remains unchanged. This will be an easy consequence of the monotonicity of $\rho$ (Definition \ref{def:dynamic_risk_mapping}), the $\varrho$-martingale property (Lemma \ref{lem:mgale}), and the continuity of $\mathcal{H}$ (Lemma \ref{lem:hprops})). 
\end{remark}

\subsection{A motivating example}
\label{sec:motiv}

A key idea in this work is to define the generalised potentials not only on $[0,1]$, but also on some subdomains of $[0,1]$ (that is, to allow either $y>0$ or $z<1$) in \eqref{eq:defcalh}.) Since this contrasts with the linear case (see eg. \eqref{eq:hdag} below), the following example provides useful motivation.
For bounded random variables $Z$ consider the worst-case risk mapping $\rho$ given by
\begin{align*}
    \rho(Z) = \PP - \essinf (Z).
\end{align*}
With the above setup, the generalised potentials are
\begin{align}\label{eq:wcrm}
h^{y,z}_{\beta,\gamma}(x) = 
\begin{cases}
    \beta, & x = y, \\
    \gamma, & x = z, \\
    \beta \wedge \gamma, & x \in (y,z), \\
    \infty, & \text{ otherwise. }
\end{cases}
\end{align}
Considering the constraints which define $\mathcal{H}$ (cf. \eqref{eq:defcalh}), for example by a suitable sketch, it is easy to see that 
\begin{align}\label{eq:wcvf}
\inf \{ h(x): h \in \mathcal{H}, \; h \geq g \text{ on } [0,1]\} = \min\left\{\sup_{y \in [0,x]}g(y), \sup_{y \in [x,1]}g(y)\right\},
\end{align}
see for example Figure \ref{fig:4}.
\begin{figure}
    \centering
        \resizebox{10cm}{!}{
\begin{tikzpicture}
\node[anchor=south west,inner sep=0] (image) at (0,0) {\includegraphics{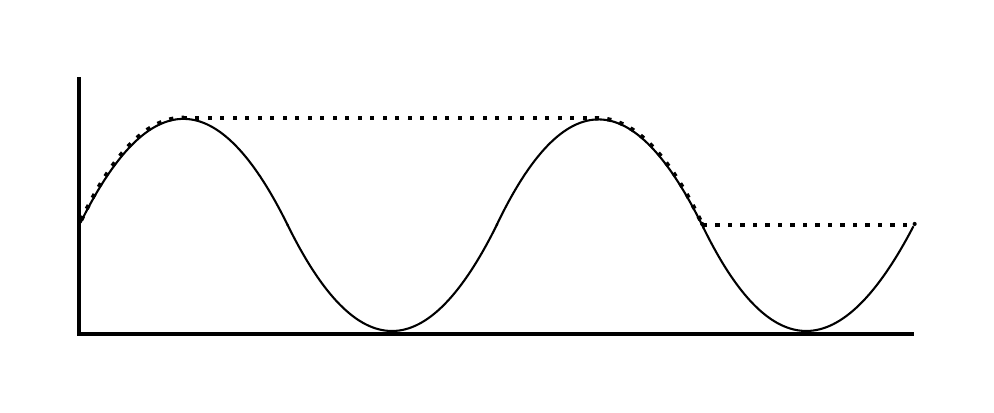}};
\begin{scope}[x={(image.south east)},y={(image.north west)}]
 \node[scale=2] at (0.082,0.07) {$0$};
 \node[scale=2] at (0.92,0.07) {$1$};
 \node[scale=2] at (0.18675,0.07) {$\frac 1 8$};
 \node[scale=2] at (0.60575,0.07) {$\frac 5 8$};
 \node[scale=2] at (0.7105,0.07) {$\frac 3 4$};
 \node[scale=2] at (0.255,0.5) {$g$};
 \node[scale=2] at (0.4,0.78) {$V$};
\end{scope}
\end{tikzpicture}
}
    \caption{Solution for the gain function $g(x) = 1 + \sin (4 \pi x)$ under the worst-case risk mapping. Although this nonlinear expectation has less regularity than assumed below, the value function (square markers) is that of Theorem \ref{pro:representation_4param}, namely $V(x)=\inf \{ h(x): h \in \mathcal{H}, \; h \geq g \text{ on } [0,1]\}$. The value and gain functions meet smoothly at $x = \frac 1 8$ and $x = \frac 5 8$, but not at $x=\frac 3 4$. No smooth transformation of this figure could reduce the solution to the classical one illustrated in Figure \ref{fig:1}.}
    \label{fig:4}
\end{figure}
We may now verify that \eqref{eq:wcvf} is the value function. Taking $x_1 \in \arg \sup_{y \in [0,x]}g(y)$ and $x_2 \in \arg \sup_{y \in [x,1]}g(y)$, 
the stopping time $\tau_{x_1,x_2} = \min\{\tau_{x_1}, \tau_{x_2}\}$ gives
\[
V(x) \geq \rho\left(g\left(X^x_{\tau_{x_1,x_2}}\right)\right) = \min\left\{\sup_{y \in [0,x]}g(y), \sup_{y \in [x,1]}g(y)\right\}.
\]
For the reverse inequality take any stopping time $\tau \in \mathscr{T}$ and, writing $S$ for the scale function of $X^x$ (see e.g.\ \cite[V.28]{Rogers_Williams_2}), note that $X^x_\tau \in [0,x]$ with positive probability since $S(X^x)$ is a martingale. It follows that $\PP - \essinf (g(X^x_\tau)) \leq \sup_{y \in [0,x]}g(y)$. Since $\tau$ was arbitrary, the symmetric argument on $[x,1]$ gives
\[
V(x) \leq \min\left\{\sup_{y \in [0,x]}g(y), \sup_{y \in [x,1]}g(y)\right\},
\]
and so the solution \eqref{eq:guessv} holds.

\subsection{Probabilistic setting}\label{sec:setting}
As noted in \cite[Ch.\ III.7]{Dynkin_Yushkevich}, problem \eqref{optimal_stoping_intro} would be trivial if $X^x$ were defined
on the real line, since it then has probability one of hitting any point. This motivates the choice of $X^x$ as a diffusion in the interval $[0,1]$ with absorption at its endpoints.

Take the classical Wiener space $\Omega = C(\R_+;\R)$ with $B_t(\omega)=\omega(t)$ for $\omega \in \Omega$ and $t \geq 0$, taking $\mathbb{F}=(\mathcal{F}_t)_{t\geq 0}$ to be the smallest filtration satisfying the usual conditions constructed from the natural filtration $\mathcal\{\sigma(B(s): s \in [0,t])\}_{t \geq 0}$, and $\mathcal{F} = \sigma \left( \cup_{t \geq 0} \mathcal{F}_t \right)$. Let $\PP$ be a probability measure on $(\Omega,\mathcal{F})$ under which $B = (B_t)_{t \geq 0}$ is a standard Brownian motion on $\R$ with $B_0=0$ almost surely (we will refer to $\PP$ as the {\it reference probability}). Let $\mu$, $\sigma$ be real measurable functions with $\sigma$ taking strictly positive values such that, for each $x\in \R$, there exists a solution $Y^x := (Y^x_s(\omega))_{s \geq 0}$ to the stochastic differential equation 
\begin{align}\label{eq:X^x}
    dY^x_s = \mu(Y_s^x)ds + \sigma(Y_s^x)dB_s, \qquad s \geq 0,
    \qquad Y_0^x = x \quad \PP\text{--a.s.,}
\end{align}
and such that, for each $t>0$ and almost every $\omega \in \Omega$, the path $Y^x_{0,t} := (Y^x_s(\omega))_{0 \leq s \leq t}$ depends continuously on $x$ in the uniform topology. Set
\begin{align}\label{eq:stopped_Y}
    X^x_t := Y^x_{t \wedge \tau_{0,1}}, \qquad t \geq 0,
\end{align}
where, for $a \in [0,1]$, we define $\tau_a^x = \inf \{ t\geq 0: Y^x_t=a \}$, and for $0 \leq a < b \leq 1$ set $\tau_{a,b}^x = \inf \{ t\geq 0: Y_t^x \notin (a,b)\}$, and write $\tau_a$ and $\tau_{a,b}$ respectively if no confusion arises.
That is, the process $X^x = (X^x_t)_{t \geq 0}$ starts at $x$ and is absorbed at the boundary $\{0,1\}$. For $t\geq 0$ let $\theta_t$ be the shift operator $\theta_t \colon \Omega \to \Omega$ given by 
\begin{align}\label{eq:theta}
        \theta_t(\omega)(s) &= \omega(t+s), \qquad s \geq 0.  
\end{align}

\begin{remark}\label{rem:cd}
    This continuous dependence of the path $Y^x_{0,t}$ on $x$ in the uniform topology is trivial for the Brownian motion $Y^x := B + x$. 
    For more general diffusions, a simple sufficient condition is that the coefficients are Lipschitz, see e.g. \cite[Lem.\ 21.5 and p.\ 417]{Kallenberg}.
\end{remark}

\subsection{Risk mappings}
\label{subsec:risk_mappings}

Although the space of all nonlinear performance criteria is large, a number of basic properties are desirable when studying optimisation problems, see for example \cite{Cheng_Riedel}. We adopt the following commonly used general dynamic framework, which (modulo possible sign changes) has been called a conditional risk measure \cite{Follmer_Schied}, dynamic risk measure \cite{kupper2009representation} and dynamic conditional risk mapping \cite{Kosmala_Martyr_Moriarty}. Let $b\mathcal{F}$, $b\mathcal{F}_t$ denote respectively the spaces of bounded random variables and bounded $\mathcal{F}_t$-measurable random variables on $(\Omega,\mathcal{F})$.

\begin{definition}\label{def:dynamic_risk_mapping}
A {\it dynamic risk mapping} is a family
$\varrho = \{ \rho_t : t \geq 0\}$
with $\rho := \rho_0$
of functions $\rho_t \colon b\mathcal{F} \to b\mathcal{F}_t$
for $t>0$ and $\rho \colon b\mathcal{F} \to \R$
such that 
\begin{enumerate}
\item $\rho$
is a risk mapping, that is it satisfies:
\begin{itemize}
\item normalisation: $\rho(0)=0$,
\item monotonicity: for all $Y,Z \in b\mathcal{F}$ we have $Y \leq Z$ $\PP$-a.s.\ $\implies \rho(Y)\leq \rho(Z)$,
\item translation invariance: $Y\in b\mathcal{F}, c \in \R \implies \rho(Y+c) = \rho(Y)+c$,
\end{itemize}
\item for each $t>0$ $\rho_t$ is a conditional risk mapping, that is it satisfies:
\begin{itemize}
\item normalisation: $\rho_t(0)=0$, $\PP$-a.s.
\item monotonicity: for all $Y,Z \in b\mathcal{F}$ we have $Y \leq Z$ $\PP$-a.s.\ $\implies \rho_t(Y)\leq \rho_t(Z)$ $\PP$-a.s.,
\item conditional translation invariance: $Y\in b\mathcal{F}, Z\in b\mathcal{F}_t \implies \rho_t(Y+Z) = \rho_t(Y)+Z$ $\PP$-a.s..
\end{itemize}
\end{enumerate}
\end{definition}

\noindent A consequence is {\it conditional locality} (see for example \cite[Prop.\ 3.3]{Cheridito_Delbaen_Kupper_dyn}):
 for every $Z$, $Z'$ in $b\mathcal{F}$ and $A \in \mathcal{F}_t$,
\[
    \rho_t(\1_{A}Z + \1_{A^{c}}Z') = \1_{A}\rho_t(Z) + \1_{A^{c}}\rho_t(Z').
\]

\begin{definition}\label{def:hbasics}
The dynamic risk mapping $\varrho$ is
\begin{itemize}
\item  {\it continuous} if for each bounded sequence $\{Y_n\}_{n \geq 1} \subset b\mathcal{F}$ with $Y_n \to Y$ $\PP$-a.s.\ we have 
\[
\rho_t(Y_n) \to \rho_t(Y) \qquad \PP\text{-a.s.\ for all }t\geq 0,
\]
\item {\it time consistent} if for all bounded stopping times $\tau_1\leq \tau_2$ we have 
\begin{equation}\label{eq:time_consistency}
\rho_{\tau_1} = \rho_{\tau_1} \circ \rho_{\tau_2}, 
\end{equation}
(cf. \cite[Def. 2]{bion2009time}),
\item {\it concave} if for all $t \geq 0$, $\lambda \in [0,1]$ and $Y,\; Z \in b\mathcal{F}$ we have 
\[
\rho_t(\lambda Y + (1-\lambda)Z) \geq \lambda \rho_t(Y) + (1-\lambda)\rho_t(Z), 
\]
(see for example \cite{Delbaen_Peng_Rosazza_Gianin, Pflug_Romisch}),
\item {\it positively homogeneous} if $\rho_t(\lambda Z) = \lambda \rho_t(Z)$ for all 
$t \geq 0$ and $\lambda \geq 0$,
\item {\it law invariant} if for each 
$t \geq 0$ we have $\rho_t(Y) = \rho_t(Z)$ if $Y,Z \in b\mathcal{F}$ have the same distribution under $\PP$.
\end{itemize}
\end{definition}
\noindent Recalling that for each $\omega \in \Omega$ we have $X^x(\omega):=(X^x_t)_{t \geq 0}(\omega) \in C(\R_+;\R)$, we will appeal to the probabilistic Markov property introduced in \cite{Kosmala_Martyr_Moriarty}.

\begin{definition}\label{def:mp}
    The dynamic risk mapping $\varrho$ is {\it Markov} if for all $x \in [0,1]$, $t>0$ and $Z = Z(X^x) \in b\mathcal{F}$ we have  
        \[
        \rho_t\left(Z\left(X^{X_t^x}\right) \circ \theta_t \right) = \rho\left(Z\left(X^{X_t^x}\right)\right) \qquad \PP\text{-a.s.,}
        \]    
and {\it strong Markov} if for each $\tau \in \mathscr{T}$, $x \in [0,1]$ and $Z \in b\mathcal{F}$ we have  
        \[
        \rho_\tau\left(Z\left(X^{X_\tau^x}\right) \circ \theta_\tau \right) = \rho\left(Z\left(X^{X_\tau^x}\right)\right) \qquad \PP\text{-a.s..}
        \]
\end{definition}

\subsection{Related work}
\label{subsec:literature}

Several studies have been made of {\it robust} optimal stopping, where multiple possible linear expectations are considered. The stopping policy is chosen assuming the worst case, making these robust criteria concave. Related work in the Markovian setting includes \cite{Alvarez}, and \cite{Cheng_Riedel} in higher dimension, where solutions are characterised using the Bellman equation. 
In the non-Markovian framework (also in higher dimension), martingale methods are applied to characterise solutions in terms of Snell envelopes, see for example \cite{Bayraktar_Yao_part_I,Bayraktar_Yao_part_II,Jelito_Pitera_Stettner,Nutz_Zhang}. Such problems are  also closely related to stochastic games of control and stopping, see for example \cite{Karatzas_Zamfirescu}.

To the best of our knowledge, algorithmic solutions have thus far only been provided in the linear case. In addition to \cite{Crocce_Mordecki} there is a series of studies \cite{cho2002linear, helmes2010construction, rohl2001linearer} which embed the problem within an infinite dimensional linear program over a space of measures and examine its dual, yielding a semi-infinite linear program and a nonlinear optimisation problem which facilitate the construction of the value function. Our algorithm has some similarity to the latter approach, in that it begins by seeking local solutions.

\vspace{2mm}
The remainder of the paper is organised as follows. Section \ref{subsec:assum} states assumptions and provides preliminary results, Section \ref{sec:repvf} states and proves the main result, Section \ref{subsec:stopset} establishes properties of the stopping set and the regularity of the value function, Section \ref{subsec:algorithm} develops the solution algorithm, and Section \ref{sec:gt} provides examples.

\section{Geometric and algorithmic solutions}\label{sec:2}

Motivated by the remarkable constructive result of \cite{Dynkin_Yushkevich}, our aim in this work is to identify a family $\mathcal{H}$ of generalised potentials whose geometry encodes the solution to \eqref{optimal_stoping_intro} in similar fashion. In the former case, one takes the set $\mathcal{H}^\dagger$ of all affine functions on $[0,1]$:
\begin{align}\label{eq:hdag}
        \mathcal{H}^\dagger := \left\{ h^{0,1}_{\beta,\gamma} : \beta,\gamma \in \R\right\},
    \end{align}
and the value function is the pointwise infimum over those $h \in \mathcal{H}^\dagger$ which dominate the gain function. Although the structure of the set $\mathcal{H}^\dagger$ is attractively simple, the counterexamples of Section \ref{sec:gt} will demonstrate that its use does not generate the correct solution in general.

In the classical proof of \cite[Ch.\ III.7]{Dynkin_Yushkevich}, the problem is solved globally using the equivalence of excessive and nonnegative concave functions, or the `fundamental property of concave functions' of \cite[Ch.\ III.8]{Dynkin_Yushkevich}. Our method of proof is instead local. Given $x = X_0$, we identify the first time at which it is optimal to stop. Under this local approach, as noted in \cite{cho2002linear, helmes2010construction}, it is more natural to consider functions $h^{y,z}_{\beta,\gamma}$ with $0 \leq y \leq x \leq  z \leq 1$. The set $\mathcal{H}$ will include some, although not all such functions, as follows. 

The argument below requires that at each $v \in \{y, z\}$, either the process is absorbed (i.e. $v \in \{0,1\}$) or $h(v) \geq \bar g = \max_{x \in [0,1]} g(x)$ (see the proof of Lemma \ref{lem:parti}). This suggests the following construction for $\mathcal{H}$ (which is illustrated in Figure \ref{fig:3}):

\begin{figure}
    \centering
\resizebox{11cm}{!}{
\begin{tikzpicture}
\node[anchor=south west,inner sep=0] (image) at (0,0) {\includegraphics{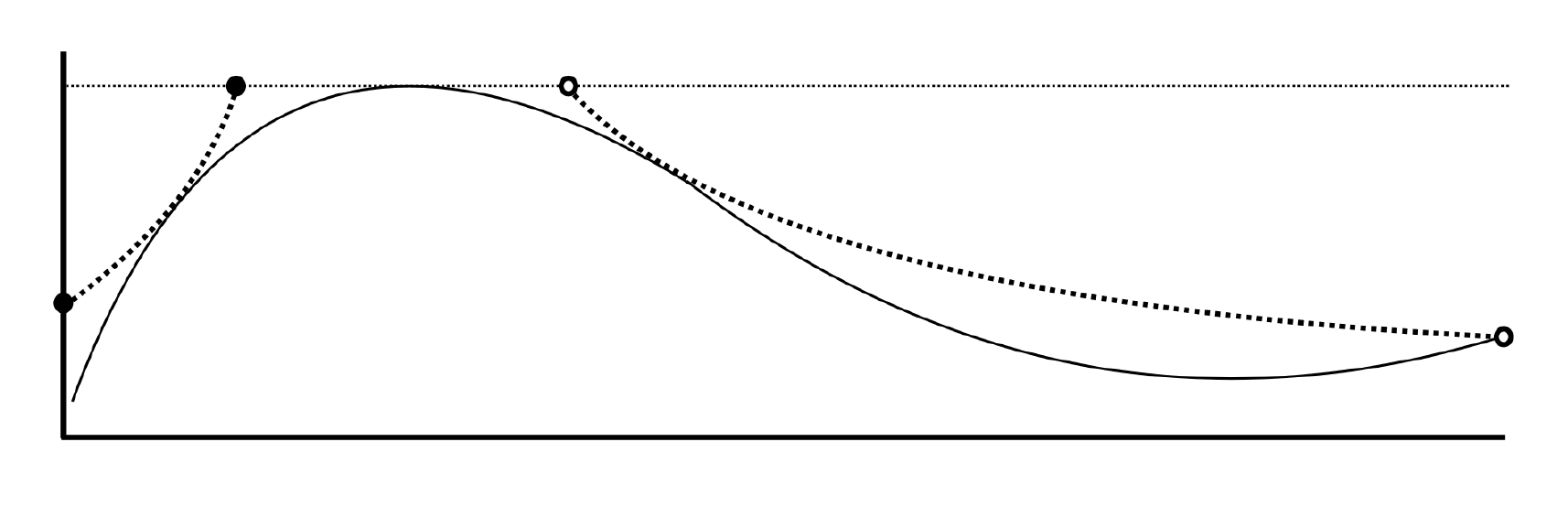}};
\begin{scope}[x={(image.south east)},y={(image.north west)}]
 \node[scale=3] at (0.04,0.072) {$0$};
 \node[scale=3] at (0.12,0.072) {$x_1$};
 \node[scale=3] at (0.42,0.072) {$x_2$};
 \node[scale=3] at (0.96,0.072) {$1$};
 \node[scale=3] at (0.02,0.83) {$\bar g$};
 \node[scale=2.5] at (0.09,0.76) {$h^{y_1,z_1}_{\beta_1,\gamma_1}$};
 \node[scale=2.5] at (0.75,0.51) {$h^{y_2,z_2}_{\beta_2,\gamma_2}$};
\end{scope}
\end{tikzpicture}
}
    \caption{Two generalised potentials $h^{y_i,z_i}_{\beta_i,\gamma_i}$, $i=1,2$, satisfying $h^{y_i,z_i}_{\beta_i,\gamma_i} \geq g$. Both lie in $\mathcal{H}$ (cf. Definition \ref{def:hcal}), since their endpoints satisfy (i) $y_1=0$, $\gamma_1 = \bar g$ (filled circular markers) and (ii) $z_1=1$, $\beta_1 = \bar g$ (open markers). When evaluated at the point $x=x_1$, $h^{y_1,z_1}_{\beta_1,\gamma_1}$ achieves the infimum in \eqref{eq:guessv}, and $h^{y_2,z_2}_{\beta_2,\gamma_2}$ achieves the corresponding infimum at $x=x_2$.}
    \label{fig:3}
\end{figure}

\begin{definition}\label{def:hcal}
\begin{enumerate}
    \item (The set $\mathcal{H}$) \label{def:hcal1}
    Write 
    \begin{multline*}
    \mathcal{H} =  \Big\{  h_{\beta,\gamma}^{y,z} : \left((y=0, z=1) \text{ or }
        (y \in (0,1), \beta = \bar g 
        \text{ and } z = 1) \right. \\ \left. \text{ or }
        (y = 0, z \in (0,1) \text{ and } \gamma = \bar g 
        )\right)
        \Big\} 
    \end{multline*}
    where 
    \begin{align*}
h^{y,z}_{\beta,\gamma}(x) := 
\begin{cases}
    \rho(\beta \1_{\{\tau_y^x < \tau_z^x\}} + \gamma \1_{\{\tau_y^x > \tau_z^x\}}), &  0 \leq y < x < z \leq 1, \\
    \infty, & \text{ otherwise, }
\end{cases}
\end{align*}
    \item (Domains) Write $d\left(h_{\beta,\gamma}^{y,z} \right) := (y,z)$ and $\overline{d\left(h_{\beta,\gamma}^{y,z} \right)} := [y,z]$,  
\item ($\Delta$-Equicontinuity) \label{equicon} 
The set $\mathcal{H}$ will be called
{\it $\Delta$-equicontinuous} if for each $\Delta >0$, the functions 
\[
\{h_{\beta,\gamma}^{y,z}: 0 \leq y < z \leq 1, \; z-y > \Delta,
\; \beta, \gamma \in [0,\bar g + 1]\}
\]
are equicontinuous. 

    That is: 
    for each $\Delta > 0$ and each such function $h = h_{\beta,\gamma}^{y,z}$,
    there exists $\delta > 0$ such that
    \[
        |u - v| \leq \delta \implies |h(u)-h(v)| \leq \epsilon,
    \] 
    \item ($\varrho$-martingales) A process $M = (M_t)_{t\geq 0}$ is a {\it $\varrho$-martingale} 
    if $M$ is bounded and for each $t \geq s$ 
    we have $\rho_s(M_t)=M_s$, $\PP$-a.s.. 
\end{enumerate}
\end{definition}

\begin{remark}\label{rem:equicontinuity}
    On each compact domain the properties of pointwise equicontinuity and uniform equicontinuity coincide \cite[Th.\ 36, p.\ 261]{Pugh}, and we do not distinguish between the two. Also, we will say `$\mathcal{H}$ is differentiable' if each $h\in \mathcal{H}$ is differentiable.
\end{remark}

For $y < z \in [0,1]$ and $\beta, \gamma \in [0, \bar g]$ define 
\begin{align*}
    g_{\beta,\gamma}^{y,z}(x) &= \beta \1_{\{y\}}(x) + \gamma \1_{\{z\}}(x), \qquad x \in [0,1].
\end{align*}

\subsection{Assumptions}

The standing assumption made in this paper is the following:

\begin{assumption}\label{continuity_assumption}
    The dynamic risk mapping $\varrho$ is strongly Markov, time consistent and continuous, $\mathcal{H}$ is $\Delta$-equicontinuous, and $g$ is continuous and nonnegative.
\end{assumption}

The $\Delta$-equicontinuity of $\mathcal{H}$ is assumed in order to obtain the continuity of the \bl{lower envelope} 
\begin{align}\label{eq:repV}
w(x) &:= \inf \{ h(x): h \in \mathcal{H}, \; h \geq g \text{ on } [0,1]\},
\end{align}
(see Figure \ref{fig:1}). In line with the classical case (cf. \eqref{eq:hdag}), each $h \in \mathcal{H}$ will be continuous (cf. part \ref{ph0} of Lemma \ref{lem:hprops} below), so the \bl{lower envelope} 
$w$ (as the infimum of a family of upper semicontinuous functions) is upper semicontinuous.

\vspace{1mm}
Note that by translation invariance, the assumption that $g$ is nonnegative is made without loss of generality. It is also interesting to note that the worst-case risk mapping of Section \ref{sec:motiv} is outside the scope of Assumption \ref{continuity_assumption}, since $\mathcal{H}$ contains discontinuous functions in that case.

\subsection{Preliminary results}\label{sec:pr}
Henceforth making Assumption \ref{continuity_assumption}, we obtain the following results:

\begin{lemma}\label{lem:mgale} Writing $M_t^x = h_{\beta,\gamma}^{y,z}(X_{t\wedge\tau_{y,z}}^x)$, the process $(M_t^x)_{t \geq 0}$ is a $\varrho$-martingale, where $0 \leq y \leq x \leq z \leq 1$. In particular $h_{\beta,\gamma}^{y,z}(x) = \rho \left(h_{\beta,\gamma}^{y,z}(X_{\tau_{y,z}}^x)\right)$.
\end{lemma}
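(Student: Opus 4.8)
The plan is to derive everything from the single identity
\[
M_t = \rho^x_t(Z), \qquad t \ge 0, \quad \PP^x\text{-a.s.}, \qquad Z := \beta\1_{\{\tau_y < \tau_z\}} + \gamma\1_{\{\tau_y > \tau_z\}} \in b\mathcal{F}.
\]
First I would dispose of the boundary cases $x \in \{y,z\}$: then $\tau_{y,z} = 0$, so $M$ is the constant $\beta$ or $\gamma$, which is a $\varrho$-martingale by normalisation and translation invariance, and the last assertion of the lemma is immediate. So assume $y < x < z$. Since $X$ exits $(y,z)$ through an endpoint in a.s.\ finite time, $X_{\tau_{y,z}} \in \{y,z\}$ and $Z \in \{\beta,\gamma\}$ $\PP^x$-a.s.; reading the values of $h^{y,z}_{\beta,\gamma}$ at $y$ and $z$ appearing in $M$ as $\rho^y(Z) = \beta$ and $\rho^z(Z) = \gamma$ (cf.\ the motivating example), we have $Z = h^{y,z}_{\beta,\gamma}(X_{\tau_{y,z}})$ a.s.\ and $\min(\beta,\gamma) \le Z \le \max(\beta,\gamma)$. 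Monotonicity and translation invariance then give $\min(\beta,\gamma) \le h^{y,z}_{\beta,\gamma} \le \max(\beta,\gamma)$ on $(y,z)$, so $M$ is bounded; and once the displayed identity holds, $M_t = \rho^x_t(Z) \in b\mathcal{F}_t$, as the definition of a $\varrho$-martingale requires.

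To prove the identity I would split on the $\mathcal{F}_t$-measurable events $A_t := \{\tau_{y,z} > t\}$ and $A_t^c$. On $A_t^c$ the variable $\1_{A_t^c}Z$ is $\mathcal{F}_t$-measurable, so conditional locality together with conditional translation invariance give $\rho^x_t(Z) = Z = h^{y,z}_{\beta,\gamma}(X_{\tau_{y,z}}) = M_t$ there. The substantive step is on $A_t$, where I would first establish the pathwise identity $Z = Z \circ \theta_t$: since $\tau_{y,z} = \min(\tau_y,\tau_z)$, on $A_t$ both $\tau_y > t$ and $\tau_z > t$ and the path is not yet absorbed, i.e.\ $\tilde\tau > t$ (this requires a brief separate verification in the three configurations $\{y,z\} \cap \{0,1\} = \emptyset$, $y = 0$, and $z = 1$); hence $X_u \circ \theta_t = X_{t+u}$ for $u \ge 0$, so $\tau_a \circ \theta_t = \tau_a - t$ for $a \in \{y,z\}$, and the ordering of $\tau_y$ and $\tau_z$ is $\theta_t$-invariant on $A_t$, giving $Z = Z \circ \theta_t$ there. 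Applying conditional locality to $Z = \1_{A_t}(Z\circ\theta_t) + \1_{A_t^c}Z$ yields $\1_{A_t}\rho^x_t(Z) = \1_{A_t}\rho^x_t(Z\circ\theta_t)$, and part \ref{smp1} of Definition \ref{def:mp} (the Markov property) gives $\rho^x_t(Z \circ \theta_t) = \rho^{X_t}(Z) = h^{y,z}_{\beta,\gamma}(X_t)$, which on $A_t$ is exactly $M_t$ because $t \wedge \tau_{y,z} = t$. Combining the two events proves $M_t = \rho^x_t(Z)$ for every $t \ge 0$ (the case $t = 0$ being the identity $\rho^x_0 = \rho^x$).

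It then remains to observe that, for $s \le t$,
\[
\rho^x_s(M_t) = \rho^x_s\bigl(\rho^x_t(Z)\bigr) = \rho^x_s(Z) = M_s \qquad \PP^x\text{-a.s.}
\]
by the time consistency of $\varrho$, so $M$ is a $\varrho$-martingale; taking $s = 0$ gives $h^{y,z}_{\beta,\gamma}(x) = M_0 = \rho^x(Z) = \rho^x\bigl(h^{y,z}_{\beta,\gamma}(X_{\tau_{y,z}})\bigr)$. I expect the main obstacle to lie in the pathwise bookkeeping on $A_t$ — verifying $\tilde\tau > t$ in each boundary configuration and deducing the $\theta_t$-invariance of the hitting-time ordering — together with making explicit the role of time consistency, which (unlike the other listed structural properties) is what supplies the tower cancellation $\rho^x_s \circ \rho^x_t = \rho^x_s$ underlying the $\varrho$-martingale property.
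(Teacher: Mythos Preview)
Your argument is correct and takes a genuinely different route from the paper's. You reduce everything to the single identity $M_t=\rho_t^x(Z)$, proved by splitting on $\{\tau_{y,z}>t\}$ and invoking only the \emph{deterministic-time} Markov property (part~\ref{smp1} of Definition~\ref{def:mp}) together with conditional locality and translation invariance; the $\varrho$-martingale property then follows from time consistency at fixed times $s\le t$, and the final identity $h(x)=\rho^x(h(X_{\tau_{y,z}}))$ drops out at $t=0$ from $Z=h(X_{\tau_{y,z}})$. By contrast, the paper works directly on $\rho_s^x(M_t)$: it applies the \emph{strong} Markov property at the random time $t\wedge\tau_{y,z}$, and then collapses the nested evaluations using the composition rule $\rho_{\tau_1}^x\circ\rho_{\tau_2}^x=\rho_{\tau_1\wedge\tau_2}^x$, which it proves separately from conditional locality, translation invariance and time consistency; the final identity is obtained by letting $t\to\infty$ and appealing to the continuity of $\varrho$. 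Your approach therefore uses strictly weaker hypotheses (no strong Markov property, no continuity) and is arguably more transparent for this lemma, while the paper's proof yields as a by-product the stopping-time composition identity $\rho_{\tau_1}^x\circ\rho_{\tau_2}^x=\rho_{\tau_1\wedge\tau_2}^x$, which is reused later (e.g.\ in the optional sampling argument).
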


\begin{proof} 
Setting $g=g_{\beta,\gamma}^{y,z}$ and $h=h_{\beta,\gamma}^{y,z}$, we have by Lemma \ref{lem:mgale} and the strong Markov property for $\varrho$ that, for $s \leq t$,
\begin{align*}
\rho_s\left(M_t^x\right)
&= \rho_s \left(h\left(X_{t\wedge \tau_{y,z}}^x\right) \right) 
= \rho_s \left(\rho\left(g\left(X_{\tau_{y,z}}^{X_{t\wedge \tau_{y,z}}^x}\right)\right) \right) \\
&= \rho_s \left( \rho_{t\wedge\tau_{y,z}}\left(g\left(X_{\tau_{y,z}}^{X_{t\wedge \tau_{y,z}}^x} \circ \theta_{t\wedge\tau_{y,z}}\right)\right) \right) \\
&= \rho_{s\wedge\tau_{y,z}}\left(g\left(X_{\tau_{y,z}}^{X_{t\wedge \tau_{y,z}}^x} \circ \theta_{t\wedge\tau_{y,z}}\right)\right) = \rho_{s\wedge\tau_{y,z}}\left(g\left(X_{\tau_{y,z}}^{X_{s\wedge \tau_{y,z}}^x} \circ \theta_{s\wedge\tau_{y,z}}\right)\right)\qquad (*)\\
&= \rho \left(g\left(X_{\tau_{y,z}}^{X_{s\wedge\tau_{y,z}}^x}\right)\right) = h\left(X_{s\wedge\tau_{y,z}}^x    \right) = M_s^x,
\end{align*}
\noindent where in step $(*)$ we use the following properties: firstly, for all $s \geq 0$ we have 
\begin{align*}
    X_{\tau_{y,z}}^x &= X_{\tau_{y,z}}^{X_{s\wedge \tau_{y,z}}^x} \circ \theta_{s\wedge\tau_{y,z}},
\end{align*}
and secondly, if $\tau_1$ and $\tau_2$ are two stopping times, then
$$\rho_{\tau_1} \circ \rho_{\tau_2} = \rho_{\tau_1\wedge\tau_2}.$$
Indeed by conditional translation invariance, conditional locality and time consistency, for each $Z \in b\mathcal{F}$ we have
\begin{align*}
\rho_{\tau_1}(\rho_{\tau_2}(Z)) 
&= \rho_{\tau_1}(\rho_{\tau_2}(Z)\1_{\{\tau_1\leq \tau_2\}}+\rho_{\tau_2}(Z)\1_{\{\tau_2<\tau_1\}}) \\ 
& = \rho_{\tau_1}(\rho_{\tau_1\vee \tau_2}(Z)\1_{\{\tau_1\leq \tau_2\}})
+ \rho_{\tau_2}(Z)\1_{\{\tau_2<\tau_1\}} = \rho_{\tau_1 \wedge \tau_2}(Z).
\end{align*}
The second claim follows by taking $s=0$, $t \to \infty$ and appealing to the a.s.\ finiteness of $\tau_{y,z}$ and the continuity of $\varrho$.
\end{proof}

\label{subsec:assum}

We will use the following version of the optional sampling theorem (a related result in the case of  positively homogeneous nonlinear expectation appears in \cite[Th.\ 4.10]{Nutz_Soner}):

\begin{proposition}\label{pro:optional_stopping} 
If $M$ is a c\`adl\`ag $\varrho$-martingale then for all $\tau \in \mathscr{T}$ we have $\rho(M_\tau) = \rho(M_0)$.
\end{proposition}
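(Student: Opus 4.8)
The plan is to prove the identity first for stopping times taking finitely many values, by a backward induction over the deterministic time levels using time consistency together with conditional locality and conditional translation invariance, and then to pass to a general $\tau\in\mathscr{T}$ by approximating it from above by such stopping times, exploiting the c\`adl\`ag property of $M$, its boundedness, and the continuity of $\varrho$. Only time consistency, the conditional-risk-mapping axioms (with their consequence conditional locality), and continuity are needed; the Markov property plays no role here. Note at the outset that the $\varrho$-martingale property gives $\rho^x(M_0)=M_0$ (take $s=t=0$) and that $M_0$ equals $\PP^x$-a.s.\ the constant $\rho^x(M_t)\in\R$ (take $s=0$), so it suffices to prove $\rho^x(M_\tau)=M_0$.

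\emph{Finitely many values.} Suppose first that $\tau$ takes values in $\{t_1<\dots<t_n\}$ with $t_1>0$. I claim, by backward induction on $k=n,\dots,1$, that $\rho^x_{t_k}(M_\tau)=M_{\tau\wedge t_k}$ $\PP^x$-a.s. For $k=n$ the variable $M_\tau=\sum_{j\le n}M_{t_j}\1_{\{\tau=t_j\}}$ is $\mathcal{F}_{t_n}$-measurable, so $\rho^x_{t_n}(M_\tau)=M_\tau=M_{\tau\wedge t_n}$ by conditional translation invariance (with $Y=0$). For the inductive step, time consistency applied to the constants $t_{k-1}\le t_k$ gives $\rho^x_{t_{k-1}}(M_\tau)=\rho^x_{t_{k-1}}\bigl(\rho^x_{t_k}(M_\tau)\bigr)=\rho^x_{t_{k-1}}(M_{\tau\wedge t_k})$; on the $\mathcal{F}_{t_{k-1}}$-set $A=\{\tau\le t_{k-1}\}$ one has $M_{\tau\wedge t_k}=M_{\tau\wedge t_{k-1}}$ while on $A^c=\{\tau\ge t_k\}$ one has $M_{\tau\wedge t_k}=M_{t_k}$, and $M_{\tau\wedge t_{k-1}}\in b\mathcal{F}_{t_{k-1}}$, so conditional locality, conditional translation invariance, and the $\varrho$-martingale property $\rho^x_{t_{k-1}}(M_{t_k})=M_{t_{k-1}}$ yield
\[
\rho^x_{t_{k-1}}(M_{\tau\wedge t_k})=\1_A M_{\tau\wedge t_{k-1}}+\1_{A^c}M_{t_{k-1}}=M_{\tau\wedge t_{k-1}}.
\]
At $k=1$ this reads $\rho^x_{t_1}(M_\tau)=M_{\tau\wedge t_1}=M_{t_1}$, and applying $\rho^x=\rho^x\circ\rho^x_{t_1}$ (time consistency with the constants $0\le t_1$) together with $\rho^x(M_{t_1})=M_0$ gives $\rho^x(M_\tau)=M_0$. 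If $\tau$ has finitely many values including $0$, approximate by $\tau\vee\epsilon$ as $\epsilon\downarrow0$: for small $\epsilon$ this has finitely many positive values, $M_{\tau\vee\epsilon}\to M_\tau$ $\PP^x$-a.s.\ by right-continuity, and $M$ is bounded, so continuity of $\varrho$ gives $\rho^x(M_\tau)=\lim_{\epsilon\downarrow0}\rho^x(M_{\tau\vee\epsilon})=M_0$.

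\emph{General $\tau$.} For a bounded stopping time $\tau\le T$ set $\tau_m=\bigl(2^{-m}\lceil 2^m\tau\rceil+2^{-m}\bigr)\wedge(T+1)$; these are stopping times with finitely many values, all strictly positive, and $\tau_m\downarrow\tau$, so $M_{\tau_m}\to M_\tau$ $\PP^x$-a.s.\ by right-continuity and, since $M$ is bounded, continuity of $\varrho$ gives $\rho^x(M_\tau)=\lim_m\rho^x(M_{\tau_m})=M_0$. Finally, for arbitrary $\tau\in\mathscr{T}$ the bounded stopping times $\tau\wedge n$ satisfy $\tau\wedge n=\tau$ eventually on $\{\tau<\infty\}$, hence $M_{\tau\wedge n}\to M_\tau$ $\PP^x$-a.s., and boundedness of $M$ with continuity of $\varrho$ give $\rho^x(M_\tau)=\lim_n\rho^x(M_{\tau\wedge n})=M_0=\rho^x(M_0)$.

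The substantive work is the backward induction, where one must decompose $M_{\tau\wedge t_k}$ correctly on $\{\tau\le t_{k-1}\}$ and its complement so that conditional locality and conditional translation invariance apply at level $t_{k-1}$. A minor but genuine point is that conditional locality is available only at strictly positive times (as $\rho^x_0$ is merely a risk mapping into $\R$), which is why the approximating stopping times are taken strictly positive; the remaining limiting arguments are routine given the c\`adl\`ag paths, the boundedness of $M$, and the continuity of $\varrho$.
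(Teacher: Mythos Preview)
Your proof is correct and follows essentially the same route as the paper's: a discrete-time step using conditional translation invariance, conditional locality and time consistency to show $\rho^x_{t_{k-1}}(M_{\tau\wedge t_k})=M_{\tau\wedge t_{k-1}}$ (the paper writes this as $\rho_n^x(M_{\tau\wedge(n+1)})=M_{\tau\wedge n}$ and telescopes via $\rho^x$), followed by approximation of a general $\tau$ from above by discretely-valued stopping times and passage to the limit via the c\`adl\`ag property, boundedness of $M$, and continuity of $\varrho$. Your additional care in keeping the approximating stopping times strictly positive (to ensure conditional locality is available) and in separating the bounded and unbounded cases is more explicit than the paper, but the substance is the same.
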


\begin{proof}
Supposing first that time is discrete, we show that $\rho_n(M_{\tau \wedge (n+1)}) = M_{\tau \wedge n}$. Indeed, since $M_{\tau \wedge n}$ is $\mathcal{F}_n$-measurable and $\{\tau>n\}\in \mathcal{F}_n$ we have
\begin{multline*}
\rho_n(M_{\tau \wedge (n+1)}) - M_{\tau \wedge n}
= \rho_n(M_{\tau \wedge (n+1)} - M_{\tau \wedge n}) \\
= \rho_n((M_{\tau \wedge (n+1)} - M_{\tau \wedge n})\1_{\{\tau>n\}})
= \1_{\{\tau>n\}} \rho_n(M_{n+1} - M_{n})
= 0.
\end{multline*}
It follows by time consistency that 
\begin{equation*} 
\rho(M_{\tau \wedge (n+1)})
= \rho(M_{\tau \wedge n})
= \cdots = \rho(M_0).
\end{equation*}
Taking $n\to \infty$ it follows by the continuity of $\rho$ that $\rho(M_\tau) = \rho(M_0)$.

In continuous time there exists a sequence $(\tau_n)_{n \geq 1}$ of stopping times attaining countably many values and decreasing almost surely to $\tau$ \cite[Th.\ 77.1, p.\ 188]{Rogers_Williams_1}. Taking the limit in $\rho(M_{\tau_n})=\rho(M_0)$ and using the fact that $M$ is c\`adl\`ag and $\rho$ is continuous, we obtain the desired result.
\end{proof}

\begin{lemma}[Properties of $\mathcal{H}$]\label{lem:hprops} Writing $h = h_{\beta,\gamma}^{y,z}$, for $0 \leq y \leq x \leq z \leq 1$ and $\beta, \gamma \in [0, \bar g] 
$ we have:
    \begin{enumerate}
        \item{(Continuity in $x$)} \label{ph0} the function $x \mapsto h^{y,z}_{\beta,\gamma}(x)$ is continuous on $[y,z]$, 
        \item (Continuity in $(\beta,\gamma)$) the function $(\beta,\gamma) \mapsto h_{\beta,\gamma}^{y,z}(x)$ is continuous, \label{ph1}
        \item (Maximum principle) \label{ph2} 
        \[
        \beta \wedge \gamma \leq h(x) \leq \beta \vee \gamma, \qquad x \in [y,z],
        \]
        \item (Monotonicity) \label{ph3} 
        if $\beta<\gamma$ then $h$ is nondecreasing and if $\beta>\gamma$ then $h$ is nonincreasing,
        \item (Translation invariance) \label{ph4} 
        we have 
        \[
        h + \alpha = h_{\alpha + \beta,\alpha + \gamma}^{y,z},
        \]
        \item (Monotone derivatives) \label{ph5} suppose that the functions $h$ and $\tilde h = h_{\tilde \beta,\gamma}^{y,z} \in \mathcal{H}$ both have left derivatives at $z$. Then if $\tilde \beta > \beta$ we have 
        \[
        h'^{-}(z) \geq \tilde h'^{-}(z),
        \]
        where $^{'-}$ denotes the left derivative,
        \item (Bounding) \label{bounding} For each pair $y,u \in [0,1)$ with $y < u$ there exists $\delta > 0$ such that 
        \begin{align*}\label{lim_of_h_0_gamma}
        h^{y,z}_{0,\bar g+1}(u') > \bar g, \qquad \forall \; u' \in [u,1),
        \end{align*}
        where $z = \min\{u'+\delta,1\}$. Similarly, for each pair $u,z \in (0,1]$ with $u < z$ there exists $\delta > 0$ such that 
        \begin{align*} 
        h^{y,z}_{\bar g+1,0}(u') > \bar g, \qquad \forall \; u' \in (0,u],        
        \end{align*}
where $y = \max\{u'-\delta,0\}$. 
        \item (Scaling) in the case when the underlying diffusion $Y^x$ is standard Brownian motion, \label{ph6} taking $h := h_{0,\gamma}^{0,z}$ and $\hat h := h_{0,\gamma}^{0,\hat z}$ where $\hat z \in [z,1]$, for each $\delta \in (0, z)$ we have
        \[
        h(z - \delta) \leq \hat h(\hat z - \delta),
        \]
        and thus if $h$ and $\hat h$ are differentiable then $h'^{-}(z) \geq \hat h'^{-}(\hat z)$. Conversely, if $h := h_{\beta,0}^{0,z}$ and $\hat h := h_{\beta,0}^{0,\hat z}$ where 
        $\hat z \in [z,1]$, and if $h$ and $\hat h$ are differentiable then $h'^{-}(z) \leq \hat h'^{-}(\hat z)$.
    \end{enumerate}
\end{lemma}

\begin{proof}
For part \ref{ph0}, we will establish continuity at each $x \in (0,1)$. One-sided continuity at $0$ and $1$ can then be proved analogously by considering the underlying diffusion $Y^x$ instead of $X^x$ (cf. Section \ref{sec:setting}), since $Y$ is not absorbed at these boundary points.
Recalling that we write $S$ for the scale function of $X^x$, 
the process $\left(M_t\right)_{t\geq 0} := \left(S(Y^x_t)\right)_{t\geq 0}$ is a continuous local martingale
and by the Dubins-Schwarz Theorem \cite[Th.\ 34.1]{Rogers_Williams_2} there exists a Brownian motion $W$ such that
$M_t = W_{\langle M \rangle_t}$, 
where $\langle M \rangle$ denotes the quadratic variation of $M$.

Recall from Section \ref{sec:setting} that, for each $t>0$ and almost every $\omega \in \Omega$, the path $X^x_{0,t} := (X^x_s(\omega))_{0 \leq s \leq t}$ depends continuously on $x$ in the uniform topology. Also, by the Law of the Iterated Logarithm at $t=0$ (see e.g. \cite[p.\ 97]{Peskir2006}), for almost all $\omega\in\Omega$ there exists $\delta>0$ such that for all $t \in [0,\delta]$ we have 
\begin{align}
    \min(W_{0,\delta}) < S(x) < \max(W_{0,\delta}) < S(z), \text{ giving }       
    \min(X_{0,\delta'}^x) < x < \max(X_{0,\delta'}^x) < z,  
\end{align}
where $\langle M \rangle_{\delta'} = \delta$. Thus almost surely, for all sufficiently small $\epsilon_n > 0$ we have $\1_{\{\tau_x^{x+\epsilon_n} < \tau_z^{x+\epsilon_n}\}} = 1$.

Take $\epsilon_n \downarrow 0$. Then for $0 \leq y \leq x \leq z \leq 1$ and $x + \epsilon_n \in [y,z]$, we have from the strong Markov property for $\varrho$ that
\begin{align*}
\rho_{\tau_{x,z}^{x+\epsilon_n}}\left(h\left(X_{\tau_{y,z}}^{x+\epsilon_n}\right)\right) 
&= \rho_{\tau_{x,z}^{x+\epsilon_n}}\left(h\left(X_{\tau_{y,z}}^{X_{\tau_{x,z}}^{x+\epsilon_n}}\right)\circ \theta_{\tau_{x,z}}\right) 
= \rho\left(h\left(X_{\tau_{y,z}}^{{X_{\tau_{x,z}}^{x+\epsilon_n}}}\right)\right) \\
&= \1_{\{\tau_x^{x+\epsilon_n} < \tau_z^{x+\epsilon_n}\}} \rho\left(h\left(X_{\tau_{y,z}}^{x}\right)\right)
+ \1_{\{\tau_x^{x+\epsilon_n} > \tau_z^{x+\epsilon_n}\}} \rho\left(h\left(X_{\tau_{y,z}}^{z}\right)\right),
\end{align*}
and time consistency gives
\begin{align*}
\rho\left(h\left(X_{\tau_{y,z}}^{x+\epsilon_n}\right)\right) 
&= \rho\left(\1_{\{\tau_x^{x+\epsilon_n} < \tau_z^{x+\epsilon_n}\}} \rho\left(h\left(X_{\tau_{y,z}}^{x}\right)\right)
+ \1_{\{\tau_x^{x+\epsilon_n} > \tau_z^{x+\epsilon_n}\}} \rho\left(h\left(X_{\tau_{y,z}}^{z}\right)\right)\right). 
\end{align*}
We conclude from the continuity of $\rho$ that 
\begin{align*}
h^{y,z}_{\beta,\gamma}(x + \epsilon_n) = \rho(h(X_{\tau_{y,z}}^{x+\epsilon_n})) \to
 \rho(h(X_{\tau_{y,z}}^{x}))) = \rho(h(X_{\tau_{y,z}}^{x})) = h^{y,z}_{\beta,\gamma}(x),
\end{align*}
establishing the right continuity of $x \mapsto h^{y,z}_{\beta,\gamma}(x)$ on $[y,z]$, while left continuity is established via symmetric arguments.

For part \ref{ph1} take a sequence $(\beta_n, \gamma_n) \to (\beta, \gamma)$, define the functions $f_n(u):=\beta_n + \frac{u - y}{z - y} (\gamma_n - \beta_n)$, $f(u):=\beta + \frac{u - y}{z - y} (\gamma - \beta)$ and set $Y_n := f_n(X_{\tau_{y,z}}^x)$. Then a.s.\ we have $Y_n \to Y:= f(X_{\tau_{y,z}}^x)$, so the continuity of $\varrho$ gives
\[
h_{\beta_n,\gamma_n}^{y,z}(x) = \rho(Y_n) \to \rho(Y) = h_{\beta,\gamma}^{y,z}(x). 
\]

Part \ref{ph2} follows from the monotonicity of $\varrho$.

For part \ref{ph3}, note that if $\beta<\gamma$ and  $y\leq x'\leq x \leq z$ then we have from part \ref{ph2} that $\beta \leq h(x)$, so the $\varrho$-martingale property (Lemma \ref{lem:mgale}) and monotonicity of $\varrho$ give
$$h(x') = \rho(h(X_{\tau_{y,x}}^{x'})) \leq h(y) \vee h(x) = \beta \vee h(x) = h(x).$$
Part \ref{ph4} follows from the translation invariance of $\varrho$ and part \ref{ph5} follows from the monotonicity of $\varrho$, since for $v \in [y,z]$ we have
\[
\frac{h(z)-h(v)}{z-v} = \frac{\tilde{h}(z)-h(v)}{z-v} \geq \frac{\tilde h(z)- \tilde h(v)}{z-v}. 
\]
    
    For part \ref{bounding}, suppose that there exist $y,u \in [0,1)$ with $y < u$ such that
    \begin{equation}
    \label{eq:sss}
    M:=\sup_{\delta > 0}\inf_{u' \in [u,1)} h^{y,\min\{u'+\delta,1\}}_{0,\bar g + 1
    }(u') \leq \bar g. 
    \end{equation}
    In the definition of $\Delta$-equicontinuity (Definition \ref{def:hcal}),
    take $\Delta = u-y$, let $\epsilon \in (0,1)$, and let $\delta > 0$ be the resulting constant. Then for each $u'\in [u,1)$ we have by $\Delta$-equicontinuity that
    \begin{align*}        
    h^{y,\min\{u'+\delta,1\}}_{0,\bar g+1
    }(u') \geq 
    h^{y,\min\{u'+\delta,1\}}_{0,\bar g+1
    }(\min\{u'+\delta,1\}) - \epsilon
    = \bar g+1
    - \epsilon, 
    \end{align*}  
    and taking the infimum over $u'\in [u,1)$ and supremum over $\delta > 0$ gives $M \geq \bar g + 1$, contradicting \eqref{eq:sss}. The remaining part is analogous.

For part \ref{ph6} we have $\1_{\left\{\tau_z^{z - \delta} < \tau_0^{z - \delta}\right\}} =  \1_{\left\{\tau_{\hat z}^{\hat z - \delta} < \tau_{\hat z - z}^{\hat z - \delta}\right\}}$ by the spatial homogeneity of the Wiener process (and the fact that $z - \delta, \hat z - \delta \in (0,1)$ and $z, \hat z, \hat z - z \in [0,1]$).
Then if $h := h_{0,\gamma}^{0,z}$ and $\hat h := h_{0,\gamma}^{0,\hat z}$,   
    \begin{align*}
    h(z - \delta) &= \rho(\gamma \1_{\left\{\tau_z^{z - \delta} < \tau_0^{z - \delta}\right\}}) = \rho (\gamma \1_{\left\{\tau_{\hat z}^{\hat z - \delta} < \tau_{\hat z - z}^{\hat z - \delta}\right\}}) \leq \rho (\gamma \1_{\left\{\tau_{\hat z}^{\hat z - \delta} < \tau_0^{\hat z - \delta}\right\}}) = \hat h(\hat z - \delta),
    \end{align*}
    where the inequality follows from the monotonicity of $\varrho$ and the fact that 
    $\1_{\left\{\tau_{\hat z}^{\hat z - \delta} < \tau_{\hat z - z}^{\hat z - \delta}\right\}} \leq \1_{\left\{\tau_{\hat z}^{\hat z - \delta} < \tau_0^{\hat z - \delta}\right\}}$, $\PP$-a.s.. 
\end{proof}

\begin{remark}\label{rem:disc}
    In financial and economic applications one may wish to account for the time value of money  in problem \eqref{optimal_stoping_intro} via exponential discounting, by setting $V^\delta(x):=\sup_{\tau\in\mathscr{T}} \rho(e^{-\delta \tau}g(X^x_\tau))$. It was shown in 
    \cite{Dayanik_Karatzas} that under linear expectation, $V^\delta$ is the envelope of  discounted potentials of the form 
    $\E\left(e^{-\delta \tau_{y,z}}\left(\beta \1_{\{\tau_y < \tau_z\}} + \gamma \1_{\{\tau_y > \tau_z\}}\right)\right)$.
    Nonetheless the linear expectation has the property of positive homogeneity (cf. Definition \ref{def:hbasics}), which would enable discounting to be included in Lemma \ref{lem:mgale}. In contrast our nonlinear setting is not positively homogeneous (cf. Section \ref{subsec:risk_mappings}), hence there is no obvious discounted analogue of Lemma \ref{lem:mgale}. More precisely, if we instead define $M_t = e^{-\alpha t\wedge\tau_{y,z}}h_{\beta,\gamma}^{y,z}(X^x_{t\wedge\tau_{y,z}})$ in Lemma \ref{lem:mgale} then its first line reads
\begin{align*}
\rho_s(M_t) 
&= \rho_s \left(e^{-\alpha t\wedge\tau_{y,z}} h\left(X^x_{t\wedge \tau_{y,z}}\right) \right) 
= \rho_s \left(e^{-\alpha t\wedge\tau_{y,z}} \rho\left(e^{-\alpha \tau_{y,z}}g\left(X^{X^x_{t\wedge \tau_{y,z}}}_{\tau_{y,z}}\right)\right) \right).
\end{align*}
Without a suitable multiplicative property, the term $e^{-\alpha t\wedge\tau_{y,z}}$ cannot be brought inside the inner expectation and, thus, the strong Markov property cannot be applied in order to proceed with the proof. We thus leave consideration of discounting as an interesting open question.
\end{remark}

\begin{lemma}\label{lem:diffislec}
    If the underlying diffusion $Y$ is standard Brownian motion and if $\mathcal{H}$ is differentiable then $\mathcal{H}$ is $\Delta$-equicontinuous.
\end{lemma}

\begin{proof}
    Take $h = h^{y,z}_{\beta,\gamma}$ 
    with $z-y > \Delta$ and 
$\beta, \gamma \in [0,\bar g + 1]$. By the spatial homogeneity of the Wiener process we may suppose without loss of generality that $y=0$. 
    We will show that there exists a constant $M>0$ independent of $h$ such that $|h'(u)| < M$ for all $u \in (0,z)$, since this is sufficient to establish the required result. For this, note that for all such $u$, by the $\varrho$-martingale property and parts \ref{ph4}--\ref{ph6} of Lemma \ref{lem:hprops}, setting $m:= \bar g$ we have
    \begin{align*}
    h'(u) &= h'^{-}(u) = (h_{h(0),h(u)}^{0,u})^{'-}(u) = (m - h(u) + h_{h(0),h(u)}^{0,u})^{'-}(u) 
     \\ & = (h_{m - h(u) + h(0),m}^{0,u})^{'-}(u) 
     \leq  (h_{0,m}^{0,u})^{'-}(u) \leq (h_{0,m}^{0,\Delta})^{'-}(\Delta),  
    \end{align*}
    and similarly $h'(u) \geq  (h_{m,0}^{0,\Delta})^{'-}(\Delta)$. 
    Thus $\abs{h'(u)} \leq M:=(h_{0,m}^{0,\Delta})^{'-}(\Delta) \vee (-(h_{m,0}^{0,\Delta})^{'-}(\Delta))$, a bound which is independent both of $h$ and of the particular choice of $u$.
\end{proof}

\subsection{Representation of the value function}\label{sec:repvf}

Next we use these properties of $\mathcal{H}$ to establish the structure of the value function (cf. Figure \ref{fig:1}). 
\begin{lemma}\label{lem:wusc}
    The \bl{lower envelope} $w$ of \eqref{eq:repV} satisfies $w(0)-g(0)=w(1)-g(1)=0$.
\end{lemma}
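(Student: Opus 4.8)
The plan is to show $w(0) = g(0)$ and $w(1) = g(1)$ by sandwiching $w$ between $g$ and a suitable member of $\mathcal{H}$ at the two endpoints. By symmetry it suffices to treat the point $0$. The inequality $w(0) \geq g(0)$ is immediate: every $h \in \mathcal{H}$ entering the infimum in \eqref{eq:repV} satisfies $h \geq g$ on $[0,1]$, so in particular $h(0) \geq g(0)$, hence $w(0) \geq g(0)$.

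For the reverse inequality $w(0) \leq g(0)$, the idea is to exhibit, for each $\epsilon > 0$, a function $h \in \mathcal{H}$ with $h \geq g$ on $[0,1]$ and $h(0) \leq g(0) + \epsilon$. A natural candidate is a function of the form $h = h^{0,z}_{\beta,\gamma}$ with $z$ close to $1$ (or $z = 1$), $\beta$ close to $g(0)$, and $\gamma = \bar g + 2$ (large enough to dominate $g$ near $z$ and to satisfy the membership condition $\gamma > \bar g + 1$ when $z < 1$). By the maximum principle (Lemma \ref{lem:hprops}\ref{ph2}) one has $h^{0,z}_{\beta,\gamma}(0) = \beta$, since $0$ is the left endpoint of the domain; more carefully, $\tau_0 = 0$ under $\PP^0$, so $h^{0,z}_{\beta,\gamma}(0) = \rho^0(\beta) = \beta$ by normalisation and translation invariance. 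Taking $\beta = g(0) + \epsilon$ then gives $h(0) = g(0) + \epsilon$ directly, provided we can choose the remaining parameters so that $h \geq g$ on all of $[0,1]$.

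The verification that $h \geq g$ on $[0,1]$ is where the uniform equicontinuity assumption enters, and this is the main obstacle. On the interval $[z,1]$ (if $z < 1$) we need $h \equiv \infty \geq g$, which is automatic by the definition of $h^{y,z}_{\beta,\gamma}$ outside its domain. On $[0,z]$ we use continuity and the fact that $h$ is monotone nondecreasing (Lemma \ref{lem:hprops}\ref{ph3}, since $\beta < \gamma$): near $0$, $h(x)$ is close to $\beta = g(0) + \epsilon$ while $g(x)$ is close to $g(0)$, so $h \geq g$ on a small interval $[0, \eta]$ by continuity of both; and on $[\eta, z]$ we push $h$ above $\bar g \geq g$ by choosing $z$ close enough to $\eta$ (equivalently, making the domain narrow) — here the Bounding property (Lemma \ref{lem:hprops}\ref{bounding}) provides a $\delta > 0$, depending only on $\eta$, such that $h^{0, \min\{x+\delta,1\}}_{0,\bar g + 2}(x) > \bar g + 1 > \bar g$ for all $x \geq \eta$, and by Lemma \ref{lem:hprops}\ref{ph4} adding the constant $\beta$ keeps us within $\mathcal{H}$ and only raises the function further. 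Combining these: fix $\epsilon$, choose $\eta$ so that $g(x) \leq g(0) + \epsilon$ on $[0,\eta]$, obtain the corresponding $\delta$ from the Bounding property, and take $z$ slightly larger than $\eta$ with $z - \eta < \delta$; the resulting $h = h^{0,z}_{g(0)+\epsilon,\; \bar g + 2}$ lies in $\mathcal{H}$, dominates $g$ on $[0,1]$, and satisfies $h(0) = g(0) + \epsilon$. Letting $\epsilon \downarrow 0$ yields $w(0) \leq g(0)$. The symmetric construction at $x = 1$, using $h^{y,1}_{\bar g + 2,\; g(1)+\epsilon}$ and the second half of the Bounding property, completes the proof.
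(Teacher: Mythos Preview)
Your strategy is exactly the paper's: for each $\epsilon>0$ exhibit $h\in\mathcal{H}$ with $h\geq g$ and $h(0)=g(0)+\epsilon$. However you overcomplicate the verification that $h\geq g$. The paper simply takes $z=\delta$ to be the modulus of continuity of $g$ at $0$ (so that $g<g(0)+\epsilon$ on $[0,\delta]$) and sets $h=h^{0,\delta}_{g(0)+\epsilon,\,\bar g+2}$. Then monotonicity (Lemma~\ref{lem:hprops}\ref{ph3}) alone gives $h(x)\geq h(0)=g(0)+\epsilon>g(x)$ on $[0,\delta]$, while $h=\infty$ on $(\delta,1]$; there is no need to split $[0,z]$ into $[0,\eta]$ and $[\eta,z]$ or to invoke the Bounding property, and in particular uniform equicontinuity is \emph{not} needed for this lemma.

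Two small slips in your detour: on $[0,\eta]$ the inequality $h\geq g$ comes from monotonicity ($h(x)\geq h(0)=g(0)+\epsilon\geq g(x)$), not merely from both functions being ``close'' by continuity; and ``adding the constant $\beta$'' to $h^{0,z}_{0,\bar g+2}$ via translation invariance yields $h^{0,z}_{\beta,\bar g+2+\beta}$, not $h^{0,z}_{\beta,\bar g+2}$. What you actually need there is monotonicity of $\rho$ in the first argument, giving $h^{0,z}_{\beta,\bar g+2}\geq h^{0,z}_{0,\bar g+2}$.
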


\begin{proof}
If $g(0)=\bar g$ then trivially $w(0)=g(0)=\bar g$. If $g(0) < \bar g$ then let $\epsilon \in (0, \bar g - g(0))$
and let $\delta > 0$ be such that $g < g(0) + \epsilon$ on $[0,\delta]$. Then by the monotonicity of $\varrho$ the function $h = h^{0,\delta}_{g(0) + \epsilon, \bar g} \in \mathcal{H}$ satisfies $h \geq g$, giving $w(0) \leq h(0) = g(0) + \epsilon$ and completing the claim.
\end{proof}

\begin{theorem}\label{pro:representation_4param}
Under Assumption \ref{continuity_assumption} we have $V = w$.
\end{theorem}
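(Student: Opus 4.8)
The statement is the equality $V=w$ of the value function and the nonconcave majorant. The plan is to prove the two inequalities separately, as is standard in optimal stopping verification arguments. For $V \le w$ (the ``easy'' direction), I would fix $x \in [0,1]$, take an arbitrary $\varepsilon>0$ and a function $h \in \mathcal{H}$ with $h \ge g$ on $[0,1]$ and $h(x) < w(x)+\varepsilon$. Writing $h = h^{y,z}_{\beta,\gamma}$, the point $x$ lies in $[y,z]$ (otherwise $h(x)=\infty$). For any stopping time $\tau \in \mathscr{T}$, I would compare $\tau$ with $\tau_{y,z}$: on $\{\tau < \tau_{y,z}\}$ the process has not yet left $(y,z)$, and I would use the $\varrho$-martingale property of $M_t = h(X_{t \wedge \tau_{y,z}})$ (Lemma \ref{lem:mgale}) together with the optional sampling theorem (Proposition \ref{pro:optional_stopping}) to get $\rho^x(h(X_{\tau \wedge \tau_{y,z}})) = h(x)$; then monotonicity and $h \ge g$, plus the fact that $h \ge g$ forces $h(X_{\tau \wedge \tau_{y,z}}) \ge g(X_\tau)$ on the relevant event, should give $\rho^x(g(X_\tau)) \le h(x) < w(x)+\varepsilon$. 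Care is needed here because $X_\tau$ need not lie in $[y,z]$; this is exactly why $\mathcal{H}$ is constrained so that at any non-absorbing endpoint $v$ of the domain one has $h(v) \ge \bar g$, ensuring $h(X_{\tau \wedge \tau_{y,z}}) \ge \bar g \ge g(X_\tau)$ when the process exits the domain. Taking the supremum over $\tau$ and infimum over admissible $h$ yields $V(x) \le w(x)$.

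For the reverse inequality $V \ge w$ (the ``hard'' direction), the strategy is the local/pathwise one advertised in the introduction: construct, starting from $x$, a stopping time achieving value at least $w(x)$. I expect this is where the bulk of the work in the subsequent sections (Lemmas on the stopping set, the parti-type lemma referenced in the text, and Lemma \ref{lem:tau3}) is used, so in a self-contained plan I would assemble it from those ingredients: identify the (closed) stopping set $\mathcal{S} = \{x : w(x) = g(x)\}$, which is nonempty (it contains $0$ and $1$ by Lemma \ref{lem:wusc}) and closed by continuity of $w$ and $g$; let $\tau^* = \inf\{t \ge 0 : X_t \in \mathcal{S}\}$, which is a.s.\ finite since $X$ is absorbed at $\{0,1\} \subset \mathcal{S}$; and show $\rho^x(g(X_{\tau^*})) = \rho^x(w(X_{\tau^*})) \ge w(x)$. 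The equality $g = w$ on $\mathcal{S}$ handles the first step; for the inequality I would argue that on the open complement of $\mathcal{S}$ the majorant $w$ is locally an element of $\mathcal{H}$ (or a pointwise infimum of such, hence itself ``$\varrho$-harmonic'' in the appropriate sense), so that $w(X_{t \wedge \tau^*})$ is a $\varrho$-submartingale, or at least satisfies $\rho^x(w(X_{\tau^* \wedge \tau_{a,b}})) \ge w(x)$ on each interval $(a,b)$ contained in the continuation region; a localisation/optional sampling argument then upgrades this to $\rho^x(w(X_{\tau^*})) \ge w(x)$.

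The main obstacle is the second inequality, specifically showing that $w$ restricted to a component of the continuation region behaves like a member of $\mathcal{H}$ — i.e.\ that the pointwise infimum defining $w$ is ``attained'' in the right geometric sense on each continuation interval, giving the $\varrho$-submartingale property of $w(X_{\cdot \wedge \tau^*})$. This is where uniform equicontinuity of $\mathcal{H}$ (for lower semicontinuity of $w$, hence continuity), the maximum principle and monotonicity from Lemma \ref{lem:hprops}, and the pathwise argument of Lemma \ref{lem:tau3} all enter; the delicate point is handling the endpoints of continuation intervals that are interior to $[0,1]$, where admissible $h \in \mathcal{H}$ must take value $> \bar g + 1$, and checking that $w$ can be dominated from above near such an endpoint by functions in $\mathcal{H}$ without overshooting. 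Granting the structural results of Sections \ref{subsec:stopset}--\ref{sec:repvf}, the remaining steps (optional sampling, monotonicity, and a.s.\ finiteness of $\tau^*$) are routine.
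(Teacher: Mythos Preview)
Your plan is essentially correct and follows the same architecture as the paper: the ``easy'' direction $V\le w$ via optional sampling for the $\varrho$-martingale $h(X_{\cdot\wedge\tau_{y,z}})$ together with the construction of $\mathcal{H}$ (which forces $h(X_{\tau\wedge\tau_{y,z}})\ge g(X_\tau)$ a.s.), and the ``hard'' direction by exhibiting the stopping time $\tau^*=\tau_{x^-,x^+}$ and showing $\rho^x(g(X_{\tau^*}))\ge w(x)$. Both your plan and the paper's proof hinge on the same crucial ingredient, namely Lemma~\ref{lem:hcirc}: the existence of $h^\circ\in\mathcal{H}$ with $h^\circ\ge g$ on $[0,1]$ and $h^\circ(x^\pm)=g(x^\pm)$. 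You correctly flag this as the main obstacle, but your description (``$w$ is locally an element of $\mathcal{H}$'', ``the infimum is attained'') understates what is involved: since $h^{x^-,x^+}_{g(x^-),g(x^+)}\notin\mathcal{H}$ in general, the paper builds $h^\circ$ by an iterative extension of its domain (using the bounding property, the intermediate value theorem and the $\varrho$-martingale property) until one endpoint reaches $\{0,1\}$ or the function exceeds $\bar g+1$, and then argues by contradiction that $h^\circ\ge g$ globally. That construction is the real content.

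There is one respect in which your decomposition is cleaner than the paper's. Your ``easy'' direction establishes $V(x)\le w(x)$ for \emph{all} $x$ (the Lemma~\ref{lem:parti} argument does not actually use the hypothesis $g(x)=w(x)$ to obtain $w(x)\ge\rho^x(g(X_\tau))$). Combined with $w(x)\le h^\circ(x)=\rho^x(g(X_{\tau^*}))\le V(x)$, you are done. The paper instead routes the argument through the restricted value $\check V$ and invokes Proposition~\ref{cor:vvc}, which in turn rests on the pathwise Lemma~\ref{lem:tau3}. So contrary to what you write, your plan does \emph{not} require Lemma~\ref{lem:tau3} at all; it is a genuine simplification. (What it buys the paper is the independent dynamic-programming statement $V=\check V$, but that is not needed for $V=w$.) One small correction: on a continuation interval $w(X_{t\wedge\tau^*})$ is in fact a $\varrho$-\emph{martingale}, not merely a submartingale, since $w=h^\circ$ there; the direction you use is still the right one.
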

\begin{remark}\label{rem:wrl}
The proof of Theorem \ref{pro:representation_4param} proceeds by first localising the problem to a minimal interval $[x^-,x^+] \ni x$ such that $V=g$ at $x^\pm$ (Proposition \ref{lem:wgV}), and obtaining the corresponding solution $h^\circ$ (Lemma \ref{lem:hcirc}). Proposition \ref{cor:vvc} then uses the strong Markov property for $\varrho$ (Lemma \ref{lem:tau3}) to  show that the local and global solutions coincide.

\end{remark}

\begin{lemma}\label{lem:parti}
    For $x\in (0,1)$ such that $g(x) = w(x)$, we have $w(x)=g(x)=V(x)$.
\end{lemma}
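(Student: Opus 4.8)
The plan is to prove the two inequalities $V(x)\ge w(x)$ and $V(x)\le w(x)$ separately. The first is immediate: taking $\tau=0\in\mathscr{T}$ and using normalisation and translation invariance of $\rho^x$ we get $\rho^x(g(X_0))=\rho^x(g(x))=g(x)$, so $V(x)\ge g(x)=w(x)$ by the hypothesis $g(x)=w(x)$. The substance is the reverse inequality, which I would obtain by combining the $\varrho$-martingales attached to $\mathcal{H}$ (Lemma \ref{lem:mgale}) with optional sampling (Proposition \ref{pro:optional_stopping}).

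For the upper bound, fix $\epsilon>0$ and, by definition of the infimum in \eqref{eq:repV}, choose $h=h^{y,z}_{\beta,\gamma}\in\mathcal{H}$ with $h\ge g$ on $[0,1]$ and $h(x)<w(x)+\epsilon$; necessarily $x\in(y,z)$, since otherwise $h(x)=\infty$. Two structural facts about this $h$ are needed. First, writing $h(y):=\lim_{u\downarrow y}h(u)$ and $h(z):=\lim_{u\uparrow z}h(u)$ — which exist and equal $\beta$ and $\gamma$ respectively because $\mathcal{H}$ is continuous, consistently with the boundary values carried by $M$ in Lemma \ref{lem:mgale} — continuity of $g$ upgrades $h\ge g$ on $(y,z)$ to $h\ge g$ on $[y,z]$. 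Second, inspecting the definition \eqref{eq:defcalh} of $\mathcal{H}$, at each endpoint $v\in\{y,z\}$ we have \emph{either} $v\in\{0,1\}$, so that $X$ is absorbed at $v$, \emph{or} $h(v)>\bar g+1>\bar g$.

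Now let $\tau\in\mathscr{T}$ be arbitrary and set $\sigma:=\tau\wedge\tau_{y,z}$. The key claim is $g(X_\tau)\le h(X_\sigma)$ $\PP^x$-a.s.. On $\{\tau\le\tau_{y,z}\}$ we have $X_\sigma=X_\tau\in[y,z]$, and the claim follows from $h\ge g$ on $[y,z]$. On $\{\tau>\tau_{y,z}\}$ we have $X_\sigma=X_{\tau_{y,z}}\in\{y,z\}$; at an endpoint $v=X_{\tau_{y,z}}$ with $v\in\{0,1\}$ the process is absorbed, so $X_\tau=v=X_\sigma$ and $h(X_\sigma)=h(v)\ge g(v)=g(X_\tau)$, while at a non-absorbing endpoint $h(X_\sigma)=h(v)>\bar g\ge g(X_\tau)$ whatever the value of $X_\tau$. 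Since $M_t=h(X_{t\wedge\tau_{y,z}})$ is a continuous (hence c\`adl\`ag) $\varrho$-martingale under $\PP^x$ with $M_0=h(x)$ and $M_\sigma=h(X_\sigma)$, monotonicity of $\rho^x$ followed by Proposition \ref{pro:optional_stopping} gives
\[
\rho^x(g(X_\tau))\le \rho^x(h(X_\sigma))=\rho^x(M_\sigma)=M_0=h(x)<w(x)+\epsilon.
\]
Taking the supremum over $\tau\in\mathscr{T}$ and then $\epsilon\downarrow0$ yields $V(x)\le w(x)$, which together with the lower bound proves $w(x)=V(x)$.

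The main obstacle is the endpoint analysis on the event $\{\tau>\tau_{y,z}\}$: one must rule out that stopping after the process has left $(y,z)$ beats $h(x)$, and this is exactly what the construction of $\mathcal{H}$ is designed for — each free (non-absorbing) endpoint carries a value exceeding $\bar g$, and continuity of $\mathcal{H}$ guarantees those endpoint values are genuinely $\beta$ and $\gamma$, so that $h$ dominates $g$ there. A secondary point requiring care is reconciling the convention $h^{y,z}_{\beta,\gamma}=\infty$ off $(y,z)$ with the finite boundary values taken by the associated $\varrho$-martingale; once this identification is made, the remaining steps are routine.
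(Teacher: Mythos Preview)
Your proof is correct and follows essentially the same route as the paper's own argument: pick $h\in\mathcal{H}$ with $h\ge g$ and $h(x)$ within $\epsilon$ of $w(x)$, apply optional sampling to the $\varrho$-martingale $h(X_{\cdot\wedge\tau_{y,z}})$, and use the endpoint structure of $\mathcal{H}$ (absorption at $0,1$ versus values exceeding $\bar g$) to justify $\rho^x(h(X_{\tau\wedge\tau_{y,z}}))\ge\rho^x(g(X_\tau))$. Your write-up is more explicit than the paper's about the case split on $\{\tau\le\tau_{y,z}\}$ versus $\{\tau>\tau_{y,z}\}$ and about reconciling the convention $h=\infty$ off $(y,z)$ with the finite boundary values $\beta,\gamma$, but these are exactly the points the paper's parenthetical remark after \eqref{eq:nostep} is covering more tersely.
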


\begin{proof} 
Let $\tau \in \mathscr{T}$. For each fixed $\Delta>0$, by \eqref{eq:repV} there exists $h = h_{\beta,\gamma}^{y,z} \in \mathcal{H}$ with $h \geq g$ and $w(x) \geq h(x) - \Delta$. 
In particular, $x \in [y,z]$ by finiteness. 
By the $\varrho$-martingale property and optional sampling (Proposition \ref{pro:optional_stopping}) we have 
\begin{align}\label{eq:nostep}
    g(x) &= w(x) \geq h(x) - \Delta = \rho(h(X_{\tau \wedge \tau_{y,z}}^x)) - \Delta \geq \rho(g(X_\tau^x)) - \Delta,
\end{align}
where the last inequality can be verified case-by-case, as follows. If $\tau < \tau_{y,z}$ we have $h(X_{\tau\wedge\tau_{y,z}}^x) = h(X_\tau^x) \geq g(X_\tau^x)$. If $\tau_{y,z} \leq \tau$ with $y>0$ and $z=1$, then in the case $\tau_{y,z}=\tau_y$ we have $h(X_{\tau\wedge\tau_{y,z}}^x) = h(y) = \bar g \geq g(X_\tau^x)$ by the construction of $\mathcal{H}$ in Definition \ref{def:hcal}, while in the case $\tau_{y,z}=\tau_z$ we have $h(X_{\tau\wedge\tau_{y,z}}^x) = h(1) \geq g(1) = g(X_\tau^x)$ due to absorption at $1$. We conclude from \eqref{eq:nostep} that $g(x) \geq V(x)$. The reverse inequality is obvious and we have $g(x)=w(x)=V(x)$.
\end{proof}

Now, fix $x\in (0,1)$ such that $w(x) > g(x)$. By Lemma \ref{lem:wusc} we may define 
\begin{align}
x^- &= \sup \{u \in [0,x) : w(u) = g(u) \}, \label{x_minus}\\
x^+ &= \inf \{u \in (x,1] : w(u) = g(u) \}, \label{x_plus}\\
\tau^* &= \tau_{x^-,x^+} \leq \tau_{0,1}. \label{eq:tst}
\end{align}

\begin{proposition}\label{lem:wgV}
    $w(x^\pm) = g(x^\pm) = V(x^\pm)$.
\end{proposition}

\begin{proof}
Suppose for a contradiction that $w(x^-) - g(x^-) = \Delta > 0$. Then if $x^-=0$, the required contradiction follows from Lemma \ref{lem:wusc}. If $x^->0$ then by construction, for each $\epsilon > 0$ there exists $u \in (x^- - \epsilon, x^-]$ and $h^\epsilon \in \mathcal{H}$ with $w(u) = g(u)$, $h^\epsilon \geq g$ and $h^\epsilon(u) \leq w(u) + \frac{\Delta}{3}$, and by the definition of $w$ we have $h^\epsilon(x^-) \geq w(x^-) = g(x^-) + \Delta$. By continuity of $g$, for sufficiently small $\epsilon$ we have $w(u) = g(u)< g(x^-) + \frac{\Delta}{3}$ and hence 
\begin{align}\label{eq:Deltaequi}
h^\epsilon(x^-) - h^\epsilon(u) \geq g(x^-) + \Delta - h^\epsilon(u)
\geq g(x^-) - w(u) + \frac{2\Delta}{3} > \frac{\Delta}{3}.
\end{align}
However each $h^\epsilon$ lies in the set $\mathcal{H}$ (Definition \eqref{def:hcal}), so may be written $h^\epsilon = h^{y,z}_{\beta,\gamma}$ with $[y,z] \ni x$ and either $y=0$ or $z=1$. Thus by the $\Delta-$equicontinuity of $\mathcal{H}$, the functions $h^\epsilon$ appearing in \eqref{eq:Deltaequi} are equicontinuous, which is a contradiction. We conclude that $w(x^-) = g(x^-) = V(x^-)$ (cf. Lemma \ref{lem:parti}) and similarly $w(x^+) = g(x^+) = V(x^+)$.
\end{proof}

\begin{lemma}\label{lem:hcirc}
If $g(x) < w(x)$ then there exists $h^\circ \in \mathcal{H}$ with $h^\circ(x^-)=g(x^-)$, $h^\circ(x^+)=g(x^+)$, and $h^\circ \geq g$.  
\end{lemma}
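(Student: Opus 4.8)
The plan is to obtain $h^\circ$ from a two-stage minimisation over a restricted subfamily of $\mathcal H$. By Lemma~\ref{lem:wgV} we have $w(x^-)=g(x^-)$ and $w(x^+)=g(x^+)$, while $w>g$ on $(x^-,x^+)$. Note that any $h^\circ=h^{y,z}_{\beta,\gamma}\in\mathcal H$ with $h^\circ(x^\pm)=g(x^\pm)$ must have $[x^-,x^+]\subseteq[y,z]$, since these values are finite, and that the defining conditions of $\mathcal H$ never permit both $y>0$ and $z<1$. I would therefore work inside the family
\[
\mathcal A:=\bigl\{\,h=h^{y,z}_{\beta,\gamma}\in\mathcal H:\ h\ge g\text{ on }[0,1],\ [x^-,x^+]\subseteq[y,z]\,\bigr\},
\]
which is nonempty since it contains the constant $h^{0,1}_{\bar g+2,\bar g+2}\equiv\bar g+2>\bar g\ge g$ (by the maximum principle, part~\ref{ph2} of Lemma~\ref{lem:hprops}).

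The first step is to show that restricting to $\mathcal A$ does not raise the relevant infima, namely
\[
\inf_{h\in\mathcal A}h(x^-)=g(x^-),\qquad \inf_{h\in\mathcal A}h(x^+)=g(x^+),
\]
and that both are attained. Given $h\in\mathcal H$ with $h\ge g$ and $h(x^-)$ close to $w(x^-)=g(x^-)$, its domain contains $x^-$ but possibly not $x^+$; using the Bounding property (part~\ref{bounding} of Lemma~\ref{lem:hprops}) together with monotonicity and scaling (parts~\ref{ph3} and~\ref{ph6}), I would replace $h$ by a function of $\mathcal H$ which agrees with $h$ up to $\varepsilon$ on $[0,x^-]$ and is $\ge\bar g+1>\bar g\ge g$ near the enlarged right end of its domain, hence still dominates $g$, lies in $\mathcal A$, and has essentially unchanged value at $x^-$; symmetrically at $x^+$. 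Attainment then follows by extracting a subsequence of minimisers along which $(\beta_n,\gamma_n)\in[0,\bar g+2]^2$ and $(y_n,z_n)\in[0,1]^2$ converge, uniform equicontinuity of $\mathcal H$ (Assumption~\ref{continuity_assumption}) giving uniform convergence on compact subsets of $(0,1)$ and hence continuity of the limit, its domination of $g$, and its membership in $\mathcal A$ — the latter requiring a check that the limiting parameters still satisfy the strict inequalities defining $\mathcal H$, again via part~\ref{bounding} of Lemma~\ref{lem:hprops}.

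In the second step, among the minimisers of $h\mapsto h(x^-)$ over $\mathcal A$ — all satisfying $h(x^-)=g(x^-)$ — I would pick one, $h^\circ$, that additionally minimises $h(x^+)$; existence is again by compactness and equicontinuity. It remains to show $h^\circ(x^+)=g(x^+)$. Suppose not, so $h^\circ(x^+)>g(x^+)=\inf_{h\in\mathcal A}h(x^+)$, and pick $\tilde h\in\mathcal A$ with $\tilde h(x^+)<h^\circ(x^+)$ but $\tilde h(x^-)=g(x^-)+\delta$ for some $\delta>0$. I would then deform $\tilde h$ within $\mathcal H$ so as to lower its value at $x^-$ to $g(x^-)$ — decreasing the left endpoint parameter $\beta$, and, when necessary, shrinking the domain or decreasing $\gamma$ and $z$ toward $x^+$ — using continuity and monotonicity in $(\beta,\gamma)$ (parts~\ref{ph1} and~\ref{ph3} of Lemma~\ref{lem:hprops}) to bring the value at $x^-$ continuously down to $g(x^-)$, while the monotone-derivative and scaling estimates (parts~\ref{ph5} and~\ref{ph6}) keep the deformed function $\ge g$ on $[0,x^+]$ and its value at $x^+$ strictly below $h^\circ(x^+)$. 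This produces a competitor in $\mathcal A$ with value $g(x^-)$ at $x^-$ and value below $h^\circ(x^+)$ at $x^+$, contradicting the choice of $h^\circ$; hence $h^\circ(x^+)=g(x^+)$, and $h^\circ$ is the required function.

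The main obstacle is precisely this last deformation: pushing an approximate minimiser at $x^+$, within $\mathcal H$, until it touches $g$ at $x^-$ without ever dropping below $g$ on $[0,x^-]$ — a region where, unlike on $(x^-,x^+)$, there is no a priori relation between $g$ and $w$ — and without raising its value at $x^+$. This is where the monotonicity, monotone-derivative, scaling and bounding properties of $\mathcal H$ gathered in Lemma~\ref{lem:hprops} must take over the role played by concavity of the majorant in the classical linear argument.
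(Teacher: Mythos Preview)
Your variational approach is genuinely different from the paper's constructive one, but the deformation in your final step is where the real difficulty lies, and you have not resolved it. Concretely: suppose $\tilde h\in\mathcal A$ has $\tilde h(x^+)<h^\circ(x^+)$ and $\tilde h(x^-)=g(x^-)+\delta$. Lowering $\beta$ (or otherwise pushing $\tilde h$ down near $x^-$) may immediately violate $\tilde h\ge g$ on $[0,x^-)$ --- for instance if $g$ has a peak there that $\tilde h$ only barely clears. The monotone-derivative and scaling properties you cite (parts~\ref{ph5} and~\ref{ph6} of Lemma~\ref{lem:hprops}) compare values and slopes of two $h$-functions \emph{at matching boundary points}; they say nothing about whether a deformed $\tilde h$ stays above the arbitrary function $g$ on the interior of $[0,x^-]$. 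You flag this as the main obstacle and then stop, so the proposal does not actually close.

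The paper proceeds in the opposite direction: rather than searching within the constraint set $\{h\ge g\}$, it builds $h^\circ$ directly as an iterative extension of $h_0:=h^{x^-,x^+}_{g(x^-),g(x^+)}$ (so that $h^\circ(x^\pm)=g(x^\pm)$ is automatic), and only afterwards verifies $h^\circ\ge g$. The idea you are missing is how that verification works on $[0,x^-]$ and $[x^+,1]$. If $h^\circ(w)<g(w)$ for some $w\in[0,x^-]$, the paper introduces $h_w:=h^{w,x^+}_{g(w),g(x^+)}$; since $V(x^-)=g(x^-)$ (Lemma~\ref{lem:wgV}), \emph{suboptimality of the stopping time $\tau_{w,x^+}$ at $x^-$} gives $h_w(x^-)\le V(x^-)=g(x^-)=h^\circ(x^-)$, while the $\varrho$-martingale property and monotonicity give the reverse inequality; equality then, together with the maximality built into the extension steps, forces $g(w)=h_w(w)\le h^\circ(w)$, a contradiction. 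This use of the \emph{optimality} equation $V(x^\pm)=g(x^\pm)$ is precisely the mechanism that ties the global constraint $h\ge g$ on $[0,x^-]$ to the pointwise target $h(x^-)=g(x^-)$, and it is absent from your deformation argument. (As an aside, your Step~1 --- enlarging the domain of a near-minimiser so that it contains $[x^-,x^+]$ while preserving its values --- is itself the paper's iterative extension; so even the preliminary part of your plan leans on the paper's construction.)
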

\begin{proof} 
We exclude the trivial case $g(x^-) = g(x^+) = \bar g$, when $h^\circ$ may simply be taken as the constant function $h^{0,1}_{\bar g,\bar g} \in \mathcal{H}$. 

The idea of the proof is to extend the function $h^{x^-,x^+}_{g(x^-),g(x^+)}$ to a function $h^\circ \in \mathcal{H}$ in such a way that $h^\circ \geq g$.  There will be four intermediate steps:
\begin{enumerate}
    \item \label{s1} Extend the function $h_0 := h^{x^-,x^+}_{g(x^-),g(x^+)}$ to the right, obtaining a function 
    $h_{n'} := h^{x^-,Z}_{g(x^-),\Gamma}$ 
    satisfying either $Z=1$ or $\Gamma \geq \bar g$,
    \item \label{s2} extend the function $h_{n'}$ to the left, obtaining a function $\bar{h}_N:=h^{W,Z}_{B,\Gamma}$ satisfying either $W=0$ or $B \geq \bar g$,
    \item \label{s3} verify that $\bar{h}_N$ dominates $g$ on $[0,x^-] \cup [x^+,1]$,
    \item \label{s4} restrict the function $\bar{h}_N$ to obtain an element $h^\circ = h^{W',Z'}_{B',\Gamma'}$ with $h^\circ \in \mathcal{H}$, and finally verify that $h^\circ \geq g$.
\end{enumerate}

{\it Step \ref{s1}.} We first extend $h_0 := h^{x^-,x^+}_{g(x^-),g(x^+)}$ to the right. If $g(x^+) = \bar g$ then set $\Gamma := \bar g$ and $Z = x^+$, and step \ref{s1} is complete. Otherwise we have $g(x^+) < \bar g$. Note from Proposition \ref{lem:wgV} that $g(x) < w(x)$ implies $x^- < x^+$. Then take $y=x^-$, $u=x^+$ in the bounding property (part \ref{bounding} of Lemma \ref{lem:hprops}) to obtain the corresponding $\delta$. Set $z_0 := x^+$  and $z_1 = \min\left\{x^+ + \delta,1\right\}$. 

Suppose for a contradiction that $\tilde{h}(x^+) > g(x^+)$, where $\tilde{h} := h^{x^-,z_1}_{g(x^-),0}$. Then since $w(x^+) = g(x^+) < \tilde{h}(x^+)$, by the definition of $w$ there exists $h \in \mathcal{H}$ with 
\begin{align*}
h \geq g, \qquad
h(x^+)  < \tilde{h} (x^+), \qquad h(x^-)  \geq g(x^-) = \tilde{h} (x^-), 
\end{align*}
so that by the continuity of $\mathcal{H}$ we have $h(u)=\tilde{h}(u)$ for some $u \in \xpm$. Next note that $h(z_1)\geq 0 = \tilde{h}(z_1)$, so by the continuity of $\mathcal{H}$ we have $h(v) = \tilde{h}(v)$ for some $v \in (x^+,z_1]$. Then since $h$ and $\tilde h$ coincide at $u \leq x^+$ and at $v > x^+$, the $\varrho$-martingale property gives $h(x^+) = \tilde{h}(x^+)$, a contradiction. We conclude that $\tilde{h}(x^+)=h^{x^-,z_1}_{g(x^-),0} (x^+) \leq g(x^+)$. 

Then recalling the constant $\delta$ obtained above from the bounding property with $y=x^-$ and $u=x^+$, we set $u'=x^+$ to obtain $h^{x^-,z_1}_{0,\bar g+1} (x^+) > \bar g$. Further, the monotonicity of $\varrho$ gives $h^{x^-,z_1}_{g(x^-),\bar g + 1} (x^+) > \bar g$. Then, from `continuity in $(\beta,\gamma)$' (part \ref{ph1} of Lemma \ref{lem:hprops}) and the intermediate value theorem, there exists $\gamma \in [0,\bar g+1)$ 
such that $h^{x^-,z_1}_{g(x^-),\gamma} (x^+) = g(x^+)$.
Set
\begin{align*}
\gamma_1 &:= \sup\{\gamma \geq 0: h^{x^-,z_1}_{g(x^-),\gamma} (x^+) = g(x^+)\}, \\   
h_1 &:= h^{x^-,z_1}_{g(x^-),\gamma_1},
\end{align*}
noting by continuity that $h_1(x^+) = g(x^+)$.
If either $z_1=1$ or $\gamma_1 \geq \bar g$
then set $\Gamma := \gamma_1$ and $Z = z_1$, and step \ref{s1} is complete. Otherwise, if $z_1 < 1$ and $\gamma_1 < \bar g$ 
then set $z_2 = \min\left\{x^+ + 2 \delta,1\right\}$. 
As before we have $h^{x^-,z_2}_{g(x^-),0} (x^+) \leq g(x^+)$. The definition of $\gamma_1$, $\varrho$-martingale property and monotonicity of $\varrho$ then give $h^{x^-,z_2}_{g(x^-),0} (z_1) \leq \gamma_1$. 
Also taking $u'=z_1$ in the bounding property gives $h^{x^-,z_2}_{0,\bar g+1} (z_1) > \bar g$ and the monotonicity of $\varrho$ gives $h^{x^-,z_2}_{g(x^-),\bar g + 1} (z_1) > \bar g$ and, as before, we may write
\begin{align*}
\gamma_2 &:= \sup\{\gamma \geq 0: h^{x^-,z_2}_{g(x^-),\gamma} (z_1) = \gamma_1\}, \\   
h_2 &:= h^{x^-,z_2}_{g(x^-),\gamma_2}.
\end{align*}
Proceeding similarly, for some $n' \in \N$ we have either $z_{n'} = 1$ or $\gamma_{n'} \geq \bar g$, and set $\Gamma = \gamma_{n'}$ and $Z = z_{n'}$, and step \ref{s1} is complete. 

{\it Step \ref{s2}.} In Step \ref{s1} the function $h_0 := h^{x^-,x^+}_{g(x^-),g(x^+)}$ was extended to the right, obtaining a function $h_{n'} := h^{x^-,Z}_{g(x^-),\Gamma}$ satisfying either $Z=1$ or $\Gamma \geq \bar g$. Indeed it can be verified by construction and the $\varrho$-martingale property that for each $n \geq 1$ we have $h_n = h_{n-1}$ on $[x^-, z_{n-1}]$, giving 
\begin{align}\label{eq:hnext}
h_n(v) = h_0(v) = \rho(g(X_{\tau^*}^v)), \qquad \forall \; v \in \xpm,    
\end{align}
where $\tau^*$ was defined in \eqref{eq:tst} as $\tau_{x^-,x^+}$.

Next we extend $h_{n'} := h^{x^-,Z}_{g(x^-),\Gamma}$ to the left. Setting $w_1 = \max\left\{0,x^- - \delta\right\}$ (for a suitable $\delta > 0$) we may define
\begin{align}\label{eq:oppit}
\beta_1 &= \sup\{ \beta \geq 0: h^{w_1,x^+}_{\beta,g(x^+)}(x^-) = g(x^-)\},     \\
\bar h_1 &= h^{w_1,Z}_{\beta_1,\Gamma}. \nonumber
\end{align}
By an iterative extension analogous to the previous one, we obtain $B \geq 0$ and $W \in [0,x^-]$ such that either $B \geq \bar g$ or $W = 0$.
Let $N$ be the index at which the iteration stops. 
As in \eqref{eq:hnext} we have $\bar h_N = \bar h_1 = h_0$ on $\xpm$. 

{\it Step \ref{s3}.} In Step \ref{s2} the function $h_{n'}$ was extended to the left, obtaining a function $\bar{h}_N:=h^{W,Z}_{B,\Gamma}$ satisfying either $W=0$ or $B \geq \bar g$. Next we verify that $\bar{h}_N \geq g$ on $[0,x^-]$. 
Suppose for a contradiction that $\bar{h}_N(w) < g(w)$ 
for some $w \in [0,x^-]$. Since then $\bar{h}_N(w) < \infty$, 
we have $w_N \leq w$ and there exists $\tilde n$ with $1 \leq \tilde n \leq N$ such that $w \in [w_{\tilde n},w_{\tilde n-1}]$. 
Defining the function $h_w := h^{w,x^+}_{g(w),g(x^+)}$, we have $h_w(w) = g(w) \leq \bar g$ and so the above procedure may be applied (with the same value for $\delta$) to extend the function $h_w$ to the left, yielding $h^*_w := h^{w',x^+}_{\gamma',g(x^+)}$ where $w' = \max\{w - \delta,0\} \leq w_{\tilde n }$. By the definition of $V$ and of $h_w$, and recalling Proposition \ref{lem:wgV}, suboptimality gives $h_w(x^-) \leq V(x^-)= g(x^-)$. On the other hand, the $\varrho$-martingale property and monotonicity of $\varrho$ give $h_w(x^-) \geq \bar{h}_N(x^-) = g(x^-)$, so $h_w(x^-) = g(x^-)$. The suprema taken during the extension process (cf. \eqref{eq:oppit}) therefore give $h^*_w(w_k) \leq \bar{h}_N(w_k)$ for $k =1,2,\ldots, \tilde n$. 
The $\varrho$-martingale property then gives $g(w) = h^*_w(w) \leq \rho\left(\bar{h}_N\left(X_{\tau_{w_{\tilde n-1},w_{\tilde n}}}^w \right)\right)=\bar{h}_N(w)$, which is the required contradiction. An analogous argument establishes that $\bar{h}_N \geq g$ on $[x^+,1]$. 

{\it Step \ref{s4}.} In Step \ref{s3} it was verified that the function $\bar{h}_N = h^{W,Z}_{B,\Gamma}$ dominates $g$ on $[0,x^-] \cup [x^+,1]$. Since $\bar{h}_N$ does not necessarily satisfy the definition \eqref{eq:defcalh} of an element of $\mathcal{H}$, in this step we first identify a `restriction' $h^\circ$ of the function $\bar{h}_N$ with $h^\circ \in \mathcal{H}$. This is achieved by modifying the constants $W$, $Z$, $B$ and $\Gamma$, as follows. If $\Gamma \geq \bar g$, by continuity in $x$ (Part \ref{ph0} of Lemma \ref{lem:hprops}) there exists $Z' \in (z_{n'-1},Z]$ with $h_{n'}(Z') = \bar g$, and we then define $\Gamma'=\bar g$. Otherwise, if $\Gamma < \bar g$ then by the above procedure we must have $Z=1$; set $Z'=Z=1$ and $\Gamma' = \Gamma$. Similarly, if $B \geq \bar g$ there exists $W' \in [W,w_{N-1})$ with $\bar{h}_{N}(W') = \bar g$, and we then define $B'=\bar g$; otherwise, if $B < \bar g$ then $W=0$ and we set $W'=W=0$ and $B' = B$.

Writing $h^\circ := h^{W',Z'}_{B', \Gamma'}$, we observe that either $W=0$ or $Z=1$. (This follows since if both $W'>0$ and $Z'<1$ then $B = \Gamma = \bar g$, and the monotonicity of $\varrho$ then forces $g(x^-) = g(x^+) = \bar g$, which was excluded at the beginning of the proof.) It is now immediate from Definition \ref{def:hcal} that the function $h^\circ := h^{W',Z'}_{B', \Gamma'}$ is an element of $\mathcal{H}$.  It is also immediate from Definition \ref{def:hcal} that 
\[
h^\circ(x) = 
\begin{cases}
    \bar{h}_N(x), & x \in [W',Z'], \\
    \infty, & x \in [0,W') \cup (Z',1],
\end{cases}
\]
so the above results give $h^\circ \geq \bar{h}_N \geq g$ on $[0,x^-] \cup [x^+,1]$. 

To complete the proof it remains only to establish that $h^\circ \geq g$ on $\xpm$. For this, let
\begin{align}\label{eq:defep1}
\epsilon = \sup\{g(u)-h^\circ(u): u \in \xpm\},
\end{align}
and suppose for a contradiction that $\epsilon > 0$. Then the function $h^\circ + \epsilon$ dominates $g$ on $\xpm$ and thus on $[0,1]$. Recalling the translation invariance property of $\mathcal{H}$ (part \ref{ph4} of Lemma \ref{lem:hprops}), the function $h^\circ + \epsilon = h^{W',Z'}_{B'+\epsilon, \Gamma'+\epsilon}$ can be `restricted' to obtain an element $h^\circ_{\epsilon} = h^{W'',Z''}_{B'', \Gamma''} \in \mathcal{H}$ which also dominates $g$ on $[0,1]$. Then by  continuity the supremum in \eqref{eq:defep1} is attained on $\xpm$ and by the construction of $h^\circ_\epsilon$ it must be attained at some $u \in (x^-,x^+)$. 
Then we have 
\[
g(u) \leq w(u) \leq h^\circ_{\epsilon}(u) = 
h^\circ(u)
+ \epsilon = g(u),
\] thus $g(u) = 
w(u)$, contradicting \eqref{x_minus}--\eqref{x_plus}. We conclude that $h^\circ \geq g$ on $\xpm$ and thus on $[0,1]$.
\end{proof}

\begin{lemma} \label{lem:tau3} Let $\tau_1, \tau_2 \in \mathscr{T}$
with $0 \leq \tau_1 \leq \tau_2$ a.s..
Then there exists a (random) stopping time $\tau_3 \in \mathscr{T}$ such that almost surely we have
\begin{align*}
 \rho_{\tau_1}(g(X_{\tau_2}^x)) 
&= \begin{cases}
\rho\left(g\left(X_{\tau_3}^{X_{\tau_1}^x}\right)\right), & \{\tau_1 < \tau_2\},   \\
g(X_{\tau_1}^x), & \{\tau_1 = \tau_2\}.
\end{cases}
\end{align*}
\end{lemma}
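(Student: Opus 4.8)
The plan is to build $\tau_3$ path-by-path using the strong Markov property of $\varrho$ together with a case split on the two events $\{\tau_1 < \tau_2\}$ and $\{\tau_1 = \tau_2\}$. On the event $\{\tau_1 = \tau_2\}$ the claim is immediate: since $X_{\tau_2} = X_{\tau_1}$ there, conditional locality lets us restrict attention to this event, and conditional translation invariance (the random variable $g(X_{\tau_1})$ being $\mathcal{F}_{\tau_1}$-measurable) gives $\rho^x_{\tau_1}(g(X_{\tau_2})) = \rho^x_{\tau_1}(g(X_{\tau_1})) = g(X_{\tau_1})$ on that event. So all the work is on $A := \{\tau_1 < \tau_2\}$.

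On $A$ the idea is to express $g(X_{\tau_2})$ as $Z \circ \theta_{\tau_1}$ for a suitable $Z \in b\mathcal{F}$ and then invoke the strong Markov property $\rho^x_{\tau_1}(Z \circ \theta_{\tau_1}) = \rho^{X_{\tau_1}}(Z)$ (the strongly Markov part of Definition \ref{def:mp}). The natural candidate is $Z = g(X_{\sigma})$ where $\sigma$ is the "residual" stopping time: informally $\sigma(\omega) = \tau_2(\theta_{\tau_1}\omega) - 0$ measured from time $\tau_1$, so that $X_{\tau_2}(\omega) = X_\sigma(\theta_{\tau_1}\omega)$ on $A$. Concretely one sets $\tau_3 := (\tau_2 - \tau_1) \circ$ (shift by $\tau_1$), i.e. $\tau_3(\omega) = \inf\{t \ge 0 : (t + \tau_1) \text{-shifted path hits the condition defining } \tau_2\}$, and checks (i) that $\tau_3 \in \mathscr{T}$, i.e. it is a.s.-finite (inherited from finiteness of $\tau_2$) and a stopping time for the shifted filtration, and (ii) the pathwise identity $X_{\tau_2} = X_{\tau_3} \circ \theta_{\tau_1}$ holds on $A$, using $X_t = B_{t \wedge \tilde\tau}$ and the shift relation $\theta_t(\omega)(s) = \omega(t+s)$ together with the fact that on $A$ the process has not yet been absorbed at time $\tau_1$ (or, if it has, then $\tau_2 = \tau_1$ contradicting membership in $A$ — this needs a small argument using that $\mathscr{T}$ consists of a.s.-finite times and absorption freezes $X$). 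Then $g(X_{\tau_2}) = (g(X_{\tau_3})) \circ \theta_{\tau_1}$ on $A$, and applying conditional locality to restrict to $A$ followed by the strong Markov property yields $\rho^x_{\tau_1}(g(X_{\tau_2})) = \rho^{X_{\tau_1}}(g(X_{\tau_3}))$ on $A$. Combining the two events gives the stated formula, with $\tau_3$ defined arbitrarily (say $\tau_3 = 0$) off $A$.

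I expect the main obstacle to be the rigorous construction of $\tau_3$ as a genuine element of $\mathscr{T}$ and the verification that it is measurable with respect to the right ($\theta_{\tau_1}$-shifted) filtration so that the strong Markov property applies — this is precisely why the paper set up $\Omega = C(\mathbb{R};\mathbb{R})$ with a two-sided filtration and the explicit shift operators $\theta_t$ in Section \ref{sec:setting}. Care is needed because $\tau_1$ is itself random, so "shifting by $\tau_1$" must be handled via $\theta_{\tau_1}(\omega) = \theta_{\tau_1(\omega)}(\omega)$ and one must confirm that $Z \circ \theta_{\tau_1}$ is well-defined and bounded measurable, and that $\tau_3 = \sigma \circ \theta_{\tau_1}$ where $\sigma$ is an ordinary stopping time of the canonical process — at which point the strongly Markov hypothesis is exactly the tool that lets us pass $\rho^x_{\tau_1}$ through the shift. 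A secondary, more routine point is checking that the absorption mechanism is compatible with restriction to $A$: on $\{\tau_1 < \tau_2\}$ we must have $X_{\tau_1} \in (0,1)$ (else $X$ is frozen and $\tau_2 = \tau_1$), so the residual process after $\tau_1$ is again a Brownian motion in $[0,1]$ absorbed at the endpoints started from $X_{\tau_1}$, which is what makes $g(X_{\tau_3})$ the correct object to feed into $\rho^{X_{\tau_1}}$.
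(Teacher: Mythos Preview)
Your overall plan matches the paper's: split on $\{\tau_1=\tau_2\}$ versus $\{\tau_1<\tau_2\}$, handle the first case by conditional translation invariance, and on the second express $g(X_{\tau_2})$ as a shift so that the strong Markov property of $\varrho$ applies. You also correctly identify the construction of $\tau_3$ as the crux. But the construction you propose has a genuine gap.

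You write that one should find an \emph{ordinary} stopping time $\sigma$ with $\tau_2-\tau_1=\sigma\circ\theta_{\tau_1}$ and then take $Z=g(X_\sigma)$. For general $\tau_1\le\tau_2$ no such fixed $\sigma$ exists: $\tau_2(\omega)$ may depend on the path before $\tau_1(\omega)$, and that information is not recoverable from $\theta_{\tau_1(\omega)}\omega$ alone (the shift amount is lost), so the relation $\sigma(\theta_{\tau_1(\omega)}\omega)=\tau_2(\omega)-\tau_1(\omega)$ can fail to define $\sigma$ consistently. Equivalently, the hitting-time description ``$\inf\{t\ge 0:\text{the }(t+\tau_1)\text{-shifted path hits the condition defining }\tau_2\}$'' presupposes that $\tau_2$ is a terminal time, which it need not be. The strong Markov hypothesis in Definition~\ref{def:mp} is stated for a fixed $Z\in b\mathcal{F}$, so this is exactly where your argument stalls.

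The paper's resolution is to make $\tau_3$ genuinely depend on $\omega$ (this is the force of the word ``random'' in the statement). For each fixed $\omega$ it introduces the gluing map $G^\omega:\bar\omega\mapsto G^\omega(\bar\omega)$, which follows $\omega$ up to time $\tau_1(\omega)$ and thereafter follows the increments of $\bar\omega$; one then sets $\tau_3^\omega(\bar\omega)=\tau_2(G^\omega(\bar\omega))-\tau_1(G^\omega(\bar\omega))$. Because $G^\omega(\theta_{\tau_1(\omega)}\omega)=\omega$, this yields the pathwise identity $g(X_{\tau_2})=g(X_{\tau_3^\omega})\circ\theta_{\tau_1(\omega)}$ needed to invoke the strong Markov property, and the two-sided filtration of Section~\ref{sec:setting} is used to verify that $\tau_3^\omega$ is an $\mathbb{F}^{\tau_1(\omega)}$-stopping time, which is then identified with an $\mathbb{F}$-stopping time by time homogeneity. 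This $\omega$-dependence of $\tau_3$ is the missing idea in your proposal.

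A minor correction: on $\{\tau_1<\tau_2\}$ it is not true that $X_{\tau_1}\in(0,1)$ necessarily (take $\tau_1=\tau_{0,1}$, $\tau_2=\tau_{0,1}+1$). What is true is that $X_{\tau_2}=X_{\tau_1}$ whenever $\tau_1\ge\tau_{0,1}$, and the identity then holds trivially; but this does not let you reduce to the case $X_{\tau_1}\in(0,1)$ in the way you suggest.
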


\begin{proof}
We divide the proof into two steps.

{\it Step (i).} Fix $\omega \in \Omega$ and consider the map $G^\omega \colon \Omega \to \Omega$ given by $\bar \omega \mapsto G^{\omega}(\bar \omega)$, which constructs a sample path $G^{\omega}(\bar \omega)$ by (a) following $\omega$ from time 0 until time $\tau_1(\omega)$, then (b) following the increments of $\bar \omega$ from time 0 onwards. That is,
\begin{align*}
    G^\omega(\bar \omega)(t) &= 
\begin{cases}
    \omega(t), & t \leq \tau_1(\omega),\\
    \omega(\tau_1(\omega)) + \bar \omega(t - \tau_1(\omega)) - \bar \omega(0), & t > \tau_1(\omega).
\end{cases}
\end{align*}
For any stopping time $\tau \in \mathscr{T}$ consider the random time $\tau^\omega(\bar \omega)$ obtained by applying the function $\tau$ to this sample path:
\begin{align*}
    \bar \omega \mapsto \tau^\omega(\bar \omega) &:= \tau(\tilde \omega), \qquad \text{ where } \qquad \tilde \omega := G^{\omega}(\bar \omega).
\end{align*}
Then since $G^\omega(\theta_{\tau_1(\omega)}(\omega)) = \omega$, we have $\tau^\omega(\theta_{\tau_1(\omega)}(\omega)) = \tau(\omega)$ and so $\tau_i^\omega \circ \theta_{\tau_1(\omega)} (\omega) = \tau_i(\omega)$ for $i=1,2$. Thus 
\[
\tau_2(\omega) = \tau_1(\omega) + (\tau_2^\omega - \tau_1^\omega) \circ \theta_{\tau_1(\omega)}(\omega),
\]
and recalling that $\tau_1 \leq \tau_2$ almost surely we have 
\begin{align}
  g\left(X_{\tau_2(\omega)}^x \right) =  
  g\left(X_{\tau_2^\omega - \tau_1^\omega}^{X^x_{\tau_1(\omega)}} \right) \circ \theta_{\tau_1(\omega)}(\omega). \label{eq:elab}
\end{align}
Define an auxiliary bounded random variable $Z^{\tau_1(\omega)}$ by
\begin{align}
    Z^{\tau_1(\omega)} &\colon \Omega \to \R, \notag \\
    Z^{\tau_1(\omega)}(\bar \omega) & = g\left(X_{\tau_2^{\omega} - \tau_1^{\omega}}^{X^x_{\tau_1(\omega)}} (\bar \omega) \right). \label{eq:def-Z}
\end{align}    

Now suppose that $\bar \omega$ is an independent random draw from $(\Omega, \mathcal{F}, \PP)$. 
Putting the above together we have that almost surely
\begin{align}
  \rho_{\tau_1(\omega)}(g(X_{\tau_2(\omega)}^x)) =  \rho_{\tau_1(\omega)}(Z^{\tau_1(\omega)} 
  \circ \theta_{\tau_1(\omega)}(\omega)) 
  = \rho_{\tau_1(\tilde \omega)}(Z^{\tau_1(\omega)} 
  \circ \theta_{\tau_1(\tilde \omega)}(\tilde \omega)),
\label{eq:2paths}
\end{align}
where the first equality follows from \eqref{eq:elab}--\eqref{eq:def-Z}. For the second equality, note the identity $\tau_1(\omega) = \tau_1(\tilde \omega)$, and also that the  Brownian increments $(\omega(\tau_1(\omega)+t)-\omega(\tau_1(\omega)))_{t > 0}$ are independent of $\mathcal{F}_{\tau_1(\omega)}$ by the strong Markov property. Then since $\bar \omega$ also has Brownian increments independent of $\mathcal{F}_{\tau_1(\omega)}$, we may
replace $\theta_{\tau_1(\omega)}(\omega)$ by $\theta_{\tau_1(\tilde \omega)}(\tilde \omega)$ in \eqref{eq:2paths}. 

{\it Step (ii).} 
Next we identify $\tau_3^\omega := \tau_2^{\omega} - \tau_1^{\omega}$ with an $\mathbb{F}$-stopping time.
For this, note that the filtration $\mathbb{F}$ can be identified with the smallest filtration satisfying the usual conditions constructed from the natural filtration of $(\tilde \omega(t))_{t \geq 0}$. Thus writing $\tilde{\mathbb{F}} = (\tilde{\mathcal{F}}_t)_{t \geq 0}$ for the latter, the $\mathbb{F}$-stopping time $\tau_2$ can be identified with an 
$\tilde{\mathbb{F}}$-stopping time. Then
since $\tau_1(\tilde \omega) = \tau_1(\omega)$, for each $\bar \omega \in \Omega$ we have
\begin{align*}
\{(\tau_2^\omega - \tau_1^\omega)(\bar \omega) \leq t \} &= 
 \{\tau_2(\tilde \omega) \leq \tau_1(\omega) + t \}
\in \tilde{\mathcal{F}}_{\tau_1(\omega)+t} =: \mathcal{F}_t^{\tau_1(\omega)},
\end{align*}
so that $\tau_3^\omega$ is measurable in the filtration $\mathbb{F}^{\tau_1(\omega)} := (\mathcal{F}_t^{\tau_1(\omega)})_{t \geq 0}$. 

Next, fix $\omega$ and note that the filtration $\mathbb{F}^{\tau_1(\omega)}$
is then simply that generated by the increments of the process $(\bar \omega(t))_{t \geq 0}$. In particular $\mathbb{F}^{\tau_1(\omega)}$ is coarser than $\mathbb{F}$, so any $\mathbb{F}^{\tau_1(\omega)}$-stopping time 
can be identified with an $\mathbb{F}$-stopping time.
By the strong Markov property for Brownian motion we have $\tau_2^\omega < \infty$ a.s., giving $0 \leq \tau_3^\omega < \infty$ a.s.. We conclude that $\tau_3^\omega$ can be identified with an $\mathbb{F}$-stopping time.

Combining 
\eqref{eq:def-Z} and \eqref{eq:2paths} and then applying the strong Markov property of $\varrho$ gives almost surely
\begin{align*}
  \rho_{\tau_1(\omega)}(g(X_{\tau_2(\omega)}^x)) &= \rho_{\tau_1(\tilde \omega)}\left(g\left(X_{\tau_3^\omega}^{X^x_{\tau_1(\tilde \omega)}}\right) \circ \theta_{\tau_1(\tilde \omega)}(\tilde \omega)\right) \\
  &= \rho\left(g\left(X_{\tau_3^\omega}^{X_{\tau_1(\tilde \omega)}^x}\right)\right) = \rho\left(g\left(X_{\tau_3^\omega}^{X^x_{\tau_1(\omega)}}\right)\right),
\end{align*}
completing the proof.
 \end{proof}

\begin{proposition}\label{cor:vvc}
For $x\in [0,1]$ and $\tau^*$ defined in \eqref{eq:tst} we have
\[
V(x) = \check V(x) := \sup_{\tau \in \mathscr{T}, \tau \leq \tau^*} \rho(g(X_{\tau}^x)).
\]
\end{proposition}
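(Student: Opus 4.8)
The plan is to establish the two inequalities separately. Throughout we work with the fixed $x \in (0,1)$ with $w(x) > g(x)$, so that $x^-, x^+$ and $\tau^* = \tau_{x^-,x^+} \le \tau_{0,1}$ are as in \eqref{x_minus}--\eqref{eq:tst}. The inequality $\check V(x) \le V(x)$ is immediate, since the supremum defining $\check V(x)$ ranges over the subset $\{\tau \in \mathscr{T} : \tau \le \tau^*\}$ of $\mathscr{T}$. For the reverse inequality, fix an arbitrary $\tau \in \mathscr{T}$ and put $\sigma := \tau \wedge \tau^*$; then $\sigma \in \mathscr{T}$ (as $\tau^*$ is $\PP^x$-a.s.\ finite) and $\sigma \le \tau^*$, so that $\rho^x(g(X_\sigma)) \le \check V(x)$, and it suffices to prove $\rho^x(g(X_\tau)) \le \rho^x(g(X_\sigma))$ and then take the supremum over $\tau$.

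The main step is to apply Lemma~\ref{lem:tau3} with $\tau_1 = \sigma$ and $\tau_2 = \tau$ (noting $0 \le \sigma \le \tau$), which produces a (random) stopping time $\tau_3 \in \mathscr{T}$ such that, $\PP^x$-a.s.,
\[
\rho^x_\sigma(g(X_\tau)) = \rho^{X_\sigma}\!\big(g(X_{\tau_3})\big) \ \text{ on } \ \{\sigma < \tau\}, \qquad \rho^x_\sigma(g(X_\tau)) = g(X_\sigma) \ \text{ on } \ \{\sigma = \tau\}.
\]
On $\{\sigma < \tau\} = \{\tau^* < \tau\}$ we have $\sigma = \tau^*$ and, by path continuity, $X_\sigma = X_{\tau^*} \in \{x^-, x^+\}$; since $\tau_3 \in \mathscr{T}$, part~\ref{smp2} of Definition~\ref{def:mp} and the definition of $V$ give $\rho^{X_\sigma}(g(X_{\tau_3})) \le V(X_\sigma)$, while Lemma~\ref{lem:wgV} gives $V(x^\pm) = g(x^\pm)$; hence $\rho^x_\sigma(g(X_\tau)) \le g(X_\sigma)$ on this event too. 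Combining the two cases, $\rho^x_\sigma(g(X_\tau)) \le g(X_\sigma)$ holds $\PP^x$-a.s.

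It remains to apply $\rho^x$ to both sides: by time consistency $\rho^x(g(X_\tau)) = \rho^x\big(\rho^x_\sigma(g(X_\tau))\big)$, and the monotonicity of $\rho^x$ yields $\rho^x(g(X_\tau)) \le \rho^x(g(X_\sigma)) \le \check V(x)$; taking the supremum over $\tau \in \mathscr{T}$ gives $V(x) \le \check V(x)$.

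I expect the main obstacle to lie in the careful bookkeeping around Lemma~\ref{lem:tau3}: making precise the composition $\rho^{X_\sigma}(g(X_{\tau_3}))$ of a random starting point with a random stopping time, and checking pathwise that it is bounded by $V(X_\sigma)$ via part~\ref{smp2} of Definition~\ref{def:mp}; and, secondly, in the fact that $\sigma$ is only $\PP^x$-a.s.\ finite and not bounded, so that the appeal to time consistency in the last paragraph should be justified by truncating $\sigma$ at finite times and passing to the limit using the continuity of $\varrho$, exactly as in the proof of Proposition~\ref{pro:optional_stopping}. The remaining steps are routine applications of monotonicity and the definitions.
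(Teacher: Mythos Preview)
Your proof is correct and follows essentially the same route as the paper's: both apply Lemma~\ref{lem:tau3} with $\tau_1 = \tau \wedge \tau^*$ and $\tau_2 = \tau$, use Lemma~\ref{lem:wgV} to bound $\rho^{X_{\tau^*}}(g(X_{\tau_3})) \le V(X_{\tau^*}) = g(X_{\tau^*})$ on $\{\tau^* < \tau\}$, and then conclude via time consistency and monotonicity that $\rho^x(g(X_\tau)) \le \rho^x(g(X_{\tau \wedge \tau^*}))$. Your explicit flagging of the need to extend time consistency from bounded to a.s.\ finite stopping times (by truncation and the continuity of $\varrho$, as in Proposition~\ref{pro:optional_stopping}) is a technical point the paper leaves implicit.
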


\begin{proof} Let $\tau \in \mathscr{T}$. Setting $\tau_1=\tau \wedge \tau^*$ and $\tau_2=\tau$ in Lemma \ref{lem:tau3} gives that for some stopping time $\tau_3$,
\begin{align}
    \rho(g(X_\tau^x)) 
    &= \rho(\rho_{\tau \wedge \tau^*}(g(X_\tau^x))) = 
    \rho\left(\1_{\{\tau < \tau^*\}}g(X_\tau^x)
    + \1_{\{\tau \geq \tau^*\}}\rho\left(g\left(X_{\tau_3}^{X_{\tau^*}^x}\right)\right)\right) \notag\\
    & \leq \rho(\1_{\{\tau < \tau^*\}}
    g(X_{\tau}^x)
    + \1_{\{\tau \geq \tau^*\}}
    g(X_{\tau^*}^x))
    = \rho(g(X_{\tau \wedge \tau^*}^x)), \notag
\end{align}
where the first equality follows from time consistency and the inequality follows from the fact that $g(X_{\tau^*}^x)=V(X_{\tau^*}^x)$ and optimality. This gives $V \leq \check V$ and hence $V = \check V$.
\end{proof}

Thus to evaluate $V(x)$ it suffices to study $\check V$ on $\xpm$. 
For this, the above analysis may be repeated with the interval $[0,1]$ replaced by $\xpm$. Denote the resulting analogues of all objects with a check mark so that, for example, 
$$\check{w}(x)=\inf \left\{ \check{h}(x) : \check{h} \in \check{\mathcal{H}}, \check{h} \geq g \text{ on } [x^-,x^+]\right\}.$$

\begin{proof}[Proof of Theorem \ref{pro:representation_4param}]
From Lemma \ref{lem:hcirc} we have $h^\circ(x^-) = g(x^-)$ and $h^\circ(x^+) = g(x^+)$, so the optional sampling theorem (Proposition \ref{pro:optional_stopping}) 
gives
        \[
        h^\circ(x) = \rho(h^\circ(X_{\tau^*}^x))
        = \rho(g(X_{\tau^*}^x)).
        \]
Then since $g \leq h^\circ \in \mathcal{H}$, we have
\begin{equation*}
    w(x) 
    \leq h^\circ(x) 
    = \rho(g(X_{\tau^*}^x)) \leq \check V(x).
\end{equation*}
Conversely, if $\tau \in \mathscr{T}$ and $\tau \leq \tau^*$ then 
 $h^\circ(x) = \rho(h^\circ(X_\tau^x)) \geq \rho(g(X_\tau^x))$, giving $h^\circ(x) = \check V(x) = V(x)$ by Proposition \ref{cor:vvc}. To complete the proof note that for any $\tilde h = h^{y,z}_{\beta,\gamma} \in \mathcal{H}$ with $x \in [y,z]$ and $\tilde h \geq g$ we have 
\begin{align*}
\tilde h(x) &= \rho(\tilde h (X_{\tau^* \wedge \tau_{y,z}}^x))
\geq \rho(g(X_{\tau^*}^x))
= h^\circ(x),
\end{align*}
(cf. the last inequlity in \eqref{eq:nostep}) so that $w(x) \geq h^\circ(x)$ and hence
$w(x) 
= h^\circ(x) = V(x)$.
\label{end_of_proof}
\end{proof}

\subsection{Stopping set and regularity}
\label{subsec:stopset}

Since the proof of Theorem \ref{pro:representation_4param} concludes with $h^\circ(x)=V(x)$, we have 
\begin{corollary}
    The stopping policy 
\[
\tau^* = \tau_{x^-,x^+} = \inf\{t \geq 0: g(X_t^x)=V(X_t^x)\}
\]
is optimal in \eqref{optimal_stoping_intro}.
\end{corollary}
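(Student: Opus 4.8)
The plan is to read off the statement from two facts already in hand: the identity $V = w$ (Theorem~\ref{pro:representation_4param}), and the explicit attainment $V(x) = h^\circ(x) = \rho^x(g(X_{\tau^*}))$ established in the final lines of that proof. First I would fix the starting point $x$, introduce the coincidence set $D := \{y \in [0,1] : g(y) = w(y)\}$ (which, being the zero set of the continuous function $w - g \geq 0$, is closed), and the associated hitting time $\sigma := \inf\{t \geq 0 : X_t \in D\}$; since $V = w$, this is precisely $\inf\{t \geq 0 : g(X_t) = V(X_t)\}$. Then I would split into the two cases $x \in D$ and $x \notin D$.

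If $x \in D$ then $\sigma = 0$, and stopping immediately yields $\rho^x(g(X_0)) = g(x)$, which equals $V(x)$ by Lemma~\ref{lem:parti} when $x \in (0,1)$ and by $V = w$ together with Lemma~\ref{lem:wusc} when $x \in \{0,1\}$ (where $X$ is absorbed); hence $\sigma = 0$ is optimal. If $x \in (0,1) \setminus D$, i.e.\ $w(x) > g(x)$, then $x^\pm$ of \eqref{x_minus}--\eqref{x_plus} are well defined, and I claim $\sigma = \tau_{x^-,x^+} = \tau^*$. Indeed, by the very definitions of $x^-$ and $x^+$ we have $w > g$ throughout the open interval $(x^-,x^+)$, so $(x^-,x^+) \cap D = \emptyset$, while Lemma~\ref{lem:wgV} gives $x^\pm \in D$. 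Since $X_0 = x \in (x^-,x^+)$ and $X$ has continuous paths, for $t < \tau_{x^-,x^+}$ we have $X_t \in (x^-,x^+) \subseteq D^c$, whereas $X_{\tau_{x^-,x^+}} \in \{x^-,x^+\} \subseteq D$; this forces $\sigma = \tau_{x^-,x^+}$.

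It remains to verify optimality of $\tau^*$ in this second case. As $\tau^* = \tau_{x^-,x^+} \leq \tau_{0,1}$ and $\tau_{0,1}$ is $\PP^x$-a.s.\ finite, we have $\tau^* \in \mathscr{T}$. The proof of Theorem~\ref{pro:representation_4param} gives $V(x) = h^\circ(x)$, and since $h^\circ$ agrees on $[x^-,x^+]$ with $h^{x^-,x^+}_{g(x^-),g(x^+)}$ while $X_{\tau^*} \in \{x^-,x^+\}$ (where $h^\circ = g$ by Lemma~\ref{lem:hcirc}), the $\varrho$-martingale property (Lemma~\ref{lem:mgale}) yields $h^\circ(x) = \rho^x(h^\circ(X_{\tau^*})) = \rho^x(g(X_{\tau^*}))$. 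Hence $\rho^x(g(X_{\tau^*})) = V(x)$ and the supremum in \eqref{optimal_stoping_intro} is attained at $\tau^*$.

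I do not anticipate a substantive obstacle: every ingredient is already proved, and the construction effectively packages the closing step of Theorem~\ref{pro:representation_4param} into a statement about the optimal policy. The only points needing a word of care are the path-continuity argument that identifies the hitting time of $D$ with the exit time of the component $(x^-,x^+)$ of the continuation region $\{w > g\}$, the closedness of $D$ (which uses the continuity of $w$ guaranteed under Assumption~\ref{continuity_assumption}), and bookkeeping the dependence of $x^\pm$, and hence of $\tau^*$, on the initial point $x$.
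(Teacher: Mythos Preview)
Your proposal is correct and follows essentially the same approach as the paper: the paper's entire proof is the single remark that ``the proof of Theorem~\ref{pro:representation_4param} concludes with $h^\circ(x)=V(x)$'', from which optimality of $\tau^*$ is immediate, and you have simply unpacked this by separating the cases $x \in D$ and $x \notin D$ and spelling out the path-continuity argument identifying $\tau_{x^-,x^+}$ with the hitting time of $\{g=V\}$. Your added detail is sound; note only that the continuity of $w=V$ you invoke for closedness of $D$ is proved in the paper as Proposition~\ref{pro:vcont} (placed just after this corollary), but its proof uses nothing beyond what is already available, so there is no circularity.
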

In this section we characterise the {\it stopping set}, the set of points $S=\{x \in [0,1]: V(x)=g(x)\}$ at which stopping is optimal, or equivalently the {\it continuation set} $C = [0,1] \setminus S$. We establish the continuity of the value function $V$ (Proposition \ref{pro:vcont}) and use the characterisation of $S$ (Lemma \ref{lem:charS}) to show in Corollary \ref{cor:sf} that differentiability of $g$ and $\mathcal{H}$ are sufficient for the differentiability of $V$. 

\begin{proposition}\label{pro:vcont}
    $V$ is continuous.
\end{proposition}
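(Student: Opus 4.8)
The plan is to show that $V=w$ (by Theorem \ref{pro:representation_4param}) is both upper and lower semicontinuous on $[0,1]$. Upper semicontinuity has effectively already been observed in the paper: for each $\epsilon>0$ and each $x\in[0,1]$ there exists $h\in\mathcal{H}$ with $h\geq g$ and $h(x)<w(x)+\epsilon$; since $\mathcal{H}$ is uniformly equicontinuous (Assumption \ref{continuity_assumption}), $h$ is continuous on $\overline{d(h)}$, so $\limsup_{u\to x}w(u)\leq\limsup_{u\to x}h(u)=h(x)<w(x)+\epsilon$, and letting $\epsilon\downarrow 0$ gives $\limsup_{u\to x}w(u)\leq w(x)$. (Near the endpoints $0,1$ one uses Lemma \ref{lem:wusc} together with the witnessing functions $h^{0,\delta}_{g(0)+\epsilon,\bar g+2}$ and its mirror image, exactly as in the proof of that lemma, to handle one-sided limits.)

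For lower semicontinuity fix $x\in(0,1)$ and a sequence $u_n\to x$; I would show $\liminf_n w(u_n)\geq w(x)$. The key is to use the optimal-stopping representation established above. If $w(x)=g(x)$ then since each $w(u_n)\geq g(u_n)$ and $g$ is continuous, $\liminf_n w(u_n)\geq\lim_n g(u_n)=g(x)=w(x)$, and we are done. Otherwise $w(x)>g(x)$; let $x^-<x<x^+$, $\tau^*=\tau_{x^-,x^+}$ and $h^\circ\in\mathcal{H}$ be as in Lemma \ref{lem:hcirc}, so that $h^\circ\geq g$, $h^\circ(x^\pm)=g(x^\pm)$ and (by the proof of Theorem \ref{pro:representation_4param}) $h^\circ(x)=\rho^x(g(X_{\tau^*}))=V(x)=w(x)$. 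For $n$ large enough $u_n\in(x^-,x^+)=d(h^\circ)$, and since $h^\circ$ is continuous on $\overline{d(h^\circ)}$ we have $w(u_n)\geq$ (nothing directly) — so instead I use that $h^\circ$ itself is a lower bound coming from a feasible strategy at $u_n$: the $\varrho$-martingale property (Lemma \ref{lem:mgale}) gives $h^\circ(u_n)=\rho^{u_n}(h^\circ(X_{\tau_{x^-,x^+}}))\geq\rho^{u_n}(g(X_{\tau_{x^-,x^+}}))$, and $\tau_{x^-,x^+}\in\mathscr{T}$ since $u_n\in(x^-,x^+)$, so $w(u_n)=V(u_n)\geq\rho^{u_n}(g(X_{\tau_{x^-,x^+}}))=h^\circ(u_n)$. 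Hence $\liminf_n w(u_n)\geq\liminf_n h^\circ(u_n)=h^\circ(x)=w(x)$ by continuity of $h^\circ$.

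Combining the two inequalities gives $\lim_{u\to x}w(u)=w(x)$ at each interior $x$; the endpoint cases $x\in\{0,1\}$ follow from the one-sided versions of the same arguments together with Lemma \ref{lem:wusc}, which pins $w(0)=g(0)$ and $w(1)=g(1)$ and provides the dominating functions needed for the one-sided upper bound. The main obstacle is the lower-semicontinuity step when $w(x)>g(x)$: one must be careful that the single function $h^\circ$ produced by Lemma \ref{lem:hcirc} simultaneously serves as a sub-solution at all nearby points $u_n$ (which it does, precisely because $d(h^\circ)$ is an open interval containing $x$ and $h^\circ\geq g$ on all of $[0,1]$), rather than having to re-run the construction of Lemma \ref{lem:hcirc} for each $u_n$ separately.
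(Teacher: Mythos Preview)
Your proof is correct and follows essentially the same route as the paper: upper semicontinuity from $w$ being a pointwise infimum of continuous functions, lower semicontinuity split into the cases $V(x)=g(x)$ (use $V\geq g$ and continuity of $g$) and $V(x)>g(x)$ (use $h^\circ$ on $(x^-,x^+)$). The only difference is cosmetic: the paper simply invokes the equality $V=h^\circ$ on $[x^-,x^+]$ already obtained in the proof of Theorem \ref{pro:representation_4param}, whereas you re-derive the one-sided bound $V(u_n)\geq h^\circ(u_n)$ via feasibility of $\tau_{x^-,x^+}$, which is all that lower semicontinuity requires.
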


\begin{proof}
    The upper semicontinuity of $V$ was noted at the beginning of Section \ref{subsec:assum}. If $V(x) = g(x)$ then the lower semicontinuity of $V$ follows from the continuity of $g$ and the fact that $V \geq g$. If $V(x) > g(x)$ then since $g(x^\pm)=V(x^\pm)$ (Proposition \ref{lem:wgV}) we have $x^- < x < x^+$. Since $V = h^\circ$ on $\xpm$ it follows that $V$ is continuous on $(x^-,x^+)$.
\end{proof}

\begin{lemma}\label{lem:charS}
    Suppose that $g$ and $\mathcal{H}$ are differentiable. Then $x$ lies in the stopping set if and only if at least one of the following holds:
    \begin{enumerate}
        \item There exists $h \in \mathcal{H}$ with: \label{sf1}
        \begin{align}\label{eq:yconds}
        \begin{cases}
    h(0) < \infty \\
    h(x) = g(x) \\
    h'(x) = g'(x) \quad \text{ if } x>0\\
    h \geq g \text{ on } [0,1],            
        \end{cases}
        \end{align}
        \item There exists $h \in \mathcal{H}$ with: \label{sf2}
        \begin{align}\label{eq:zconds}
        \begin{cases}
    h(1) < \infty \\
    h(x) = g(x) \\
    h'(x) = g'(x) \quad \text{ if } x<1\\
    h\geq g \text{ on } [0,1].            
        \end{cases}
        \end{align} 
    \end{enumerate}
\end{lemma}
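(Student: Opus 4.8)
The plan is to prove the two implications separately, with essentially all the work in the ``only if'' direction. Throughout I will use that, since $\mathcal{H}$ is differentiable, it is uniformly equicontinuous by Lemma \ref{lem:diffislec}, and that $V=w$ by Theorem \ref{pro:representation_4param}.

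\textbf{The ``if'' direction.} Suppose some $h\in\mathcal{H}$ satisfies \eqref{eq:yconds} or \eqref{eq:zconds}; in particular $h\ge g$ on $[0,1]$ and $h(x)=g(x)$. From the definition \eqref{eq:repV} of the majorant, $w(x)\le h(x)=g(x)$, while $w\ge g$ pointwise since every competitor dominates $g$; hence $w(x)=g(x)$ and so $V(x)=g(x)$, i.e.\ $x$ lies in the stopping set. Note that the derivative conditions in \eqref{eq:yconds}--\eqref{eq:zconds} are not needed for this direction: when $x$ is interior they hold automatically, because $h-g\ge 0$ attains the value $0$ at the interior point $x$ and both functions are differentiable there by hypothesis.

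\textbf{Reduction for the ``only if'' direction.} Now suppose $x$ lies in the stopping set, so $w(x)=g(x)$. I claim it suffices to produce a single $h_\star\in\mathcal{H}$ with $h_\star\ge g$ on $[0,1]$ and $h_\star(x)=g(x)$, i.e.\ to show the infimum defining $w(x)$ is \emph{attained}. Indeed, every $h_\star=h^{y,z}_{\beta,\gamma}\in\mathcal{H}$ has $y=0$ (so $h_\star(0)=\beta<\infty$) or $z=1$ (so $h_\star(1)=\gamma<\infty$), and when $x$ is interior we then have $y<x<z$, so $h_\star$ is differentiable at $x$ and $h_\star'(x)=g'(x)$ since $h_\star-g\ge 0$ vanishes at $x$; thus $h_\star$ verifies \eqref{eq:yconds} or \eqref{eq:zconds}. (When $x\in\{0,1\}$ the produced $h_\star$ automatically has $y=0$, respectively $z=1$, since otherwise $h_\star(x)=\infty$, so the relevant condition holds with the derivative clause void.)

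\textbf{Attainment at an interior point.} Fix interior $x$ and take a minimising sequence $h_n=h^{y_n,z_n}_{\beta_n,\gamma_n}\in\mathcal{H}$ with $h_n\ge g$ and $h_n(x)\to w(x)=g(x)$; finiteness of the limit lets us assume $y_n\le x\le z_n$, with $\beta_n,\gamma_n\in[0,\bar g+2]$. The key observation is that the sequence cannot \emph{pinch} at $x$: if $y_n\to x$ along a subsequence then $y_n>0$ eventually, so $\beta_n>\bar g+1$ by the definition of $\mathcal{H}$, whereas uniform equicontinuity (applicable on $(\Delta,1-\Delta)$ for any $\Delta<\min\{x,1-x\}$) with $h_n(y_n)=\beta_n$ forces $|\beta_n-h_n(x)|\to 0$, hence $\beta_n\to g(x)\le\bar g$, a contradiction; the argument that $z_n\not\to x$ is symmetric. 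Passing to a subsequence gives $y_n\to y_\star<x<z_\star$ with $z_n\to z_\star$, $\beta_n\to\beta_\star$, $\gamma_n\to\gamma_\star$. Using continuity of $\varrho$, continuity in $(\beta,\gamma)$ (Lemma \ref{lem:hprops}\ref{ph1}) and the a.s.\ continuity of the Brownian hitting times $\tau_{y_n},\tau_{z_n}$ in the levels, one checks that $h_n\to h_\star:=h^{y_\star,z_\star}_{\beta_\star,\gamma_\star}$ pointwise on $[0,1]$ (with $h_\star$ taking the boundary values $\beta_\star,\gamma_\star$ at $y_\star,z_\star$), so $h_\star\ge g$ on $[0,1]$ and $h_\star(x)=g(x)$. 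If $h_\star\in\mathcal{H}$ we are done. The only way it can fail to be is that a strict inequality defining $\mathcal{H}$ becomes an equality in the limit, i.e.\ $h_\star=h^{0,z_\star}_{\beta_\star,\bar g+1}$ with $z_\star<1$ (or symmetrically $h^{y_\star,1}_{\bar g+1,\gamma_\star}$ with $y_\star>0$); here I would re-extend along the $\varrho$-martingale, choosing $z'\in(z_\star,1]$ by the intermediate value theorem so that $h^{0,z'}_{\beta_\star,\bar g+2}(z_\star)=\bar g+1$, whence Lemma \ref{lem:mgale} gives $h^{0,z'}_{\beta_\star,\bar g+2}=h_\star$ on $[0,z_\star]$ (so its value at $x$ is unchanged), monotonicity gives $h^{0,z'}_{\beta_\star,\bar g+2}\ge\bar g+1>g$ on $[z_\star,1]$, and $h^{0,z'}_{\beta_\star,\bar g+2}\in\mathcal{H}$; this is the required $h_\star$.

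\textbf{Boundary points and the main obstacle.} The endpoints $x\in\{0,1\}$ lie in the stopping set by Lemma \ref{lem:wusc}, and there the scheme is the same but uniform equicontinuity is unavailable at $0$ and $1$, so the no-pinching step must be replaced --- for instance by splitting according to whether $g$ has a one-sided local maximum at the endpoint (in which case $h^{0,\delta}_{g(0),\bar g+2}$ or its mirror dominates $g$ for small $\delta$), and otherwise by exploiting the steep admissible functions near the absorbing boundary provided by Lemma \ref{lem:hprops}\ref{bounding} (and, if convenient, the function $h^\circ$ of Lemma \ref{lem:hcirc} when $x$ is an endpoint of a component of the continuation set). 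I expect the genuinely delicate part of the whole proof to be precisely this attainment of the infimum defining $w$ --- in effect a compactness statement for $\{h\in\mathcal{H}:h\ge g\}$ --- the two sources of trouble being degeneration of the domain $[y,z]$ and the survival of the strict bounds $\beta>\bar g+1$, $\gamma>\bar g+1$ under limits; both are handled above for interior $x$, and the boundary cases call for the extra care just indicated.
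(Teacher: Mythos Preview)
Your proposal follows the same overall strategy as the paper: the ``if'' direction is immediate (with the derivative condition automatic at interior points), and the ``only if'' direction is reduced to showing that the infimum defining $w(x)$ is attained, via a minimising sequence and a compactness argument. Your ``no-pinching'' step is correct and matches the paper's argument that the truncation points stay away from $x$.

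There are two points where the paper's execution is tighter than yours. First, you take limits in all four parameters $(y_n,z_n,\beta_n,\gamma_n)$ and appeal to continuity of hitting times in the level; this is plausible but not established in the paper, which only proves continuity in $(\beta,\gamma)$ (Lemma~\ref{lem:hprops}\ref{ph1}). The paper sidesteps this by first passing to a subsequence with $y_n=0$ (using that monotonicity forces $h_n(0)\le g(x)+\epsilon_n$ or $h_n(1)\le g(x)+\epsilon_n$), then choosing a \emph{fixed} $z$ where all $h_n$ exceed $\bar g+\tfrac12$, and finally using the $\varrho$-martingale property to replace $h_n$ on $[0,z]$ by $h^{0,z}_{h_n(0),h_n(z)}$ before passing to the limit in $(\beta,\gamma)$ only. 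This is worth noting because it stays strictly within the toolkit already developed.

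Second, your re-extension step has a gap. You claim to find $z'\in(z_\star,1]$ with $h^{0,z'}_{\beta_\star,\bar g+2}(z_\star)=\bar g+1$ by the intermediate value theorem in $z'$, but this fails if $h^{0,1}_{\beta_\star,\bar g+2}(z_\star)>\bar g+1$ (which can occur, e.g.\ in the linear case when $z_\star$ is close to $1$): then the map $z'\mapsto h^{0,z'}_{\beta_\star,\bar g+2}(z_\star)$ stays above $\bar g+1$ on all of $(z_\star,1]$. The fix is easy---in that case take $z'=1$ and apply the intermediate value theorem in $\gamma'$ instead, using $h^{0,1}_{\beta_\star,\beta_\star}(z_\star)=\beta_\star\le\bar g<\bar g+1$ (here $\beta_\star\le g(x)$ follows from monotonicity once $y_n=0$ and $\gamma_n>\bar g+1$). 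The paper avoids this case analysis by simply citing the extension procedure from the proof of Lemma~\ref{lem:hcirc}. For the boundary points $x\in\{0,1\}$, the paper treats $x=1$ directly (there $z_n=1$ is forced since otherwise $h_n(1)=\infty$), which is simpler than the case split you sketch.
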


\begin{proof} 
    Condition \ref{sf1} is sufficient to imply $x \in S$, as is condition \ref{sf2}, so we assume that $x \in S$ and seek to establish the necessity of either condition \ref{sf1} or \ref{sf2}.

    We first construct $h_* \in \mathcal{H}$ with $h_* \geq g$ and $h_*(x) = g(x)$. Let $(\epsilon_n)_{n \in \N} \downarrow 0$ with $\epsilon_0 < \frac 1 2$. Since $x \in S$ implies that $w(x)=g(x)$, for each $n \in \N$ there exists $h_n \in \mathcal{H}$ with $h_n \geq g$ and $h_n(x) < g(x) + \epsilon_n$. By the construction of $\mathcal{H}$ (cf. Definition \ref{def:hcal}) and the monotonicity of $\varrho$, either $h_n(0) \leq g(x)+ \epsilon_n$ infinitely often or $h_n(1) \leq g(x)+\epsilon_n$ infinitely often. Without loss of generality, by the Bolzano-Weierstrass theorem (and choosing an appropriate subsequence in $\N$ if necessary) we may suppose that the sequence $(h_n(0))_n$ converges, so that $\beta := \lim_n h_n(0) \leq g(x)$. If $x=1$ then, by `continuity in $(\beta, \gamma)$', the functions $h^{0,1}_{h_n(0),h_n(1)}$ converge pointwise to $h_*:= h^{0,1}_{\beta,g(1)} \in \mathcal{H}$ with $h_* \geq g$ and $h_*(1)=g(1)$.
    
    Suppose therefore that $x<1$ and, with the convention $\inf \emptyset = \infty$, set 
    \[
    z_n := \inf\{z \in (x,1]: h_n(z) > \bar g + 1\} \wedge 1.
    \]
    By again choosing a suitable subsequence we may assume that $(z_n)_n$ converges to some $z' \leq 1$. Noting that $h_n(x) < \bar g + \frac 1 2$, we may appeal to the $\Delta-$equicontinuity of $\mathcal{H}$ as in the proof of Proposition \ref{lem:wgV} to conclude  that $z'>x$. Appealing again to $\Delta$-equicontinuity, there exists $z \in (x,z')$ such that $h_n(z) > \bar g + \frac 1 2$ for all $n$.
    Then by `continuity in $(\beta,\gamma)$', the functions $h^{0,z}_{h_n(0),h_n(z)}$ converge along some subsequence of $\N$ to $h_*:= h^{0,z}_{\beta,\gamma}$ for some $\gamma \in [\bar g + \frac 1 2, \bar g + 1]$ with $h_*(x) = g(x)$. If $z < 1$ then $h_*$ may be extended to an element of $\mathcal{H}$ (cf. the proof of Lemma \ref{lem:hcirc}). Assume that this extension has been performed.
    Then $h_*$ lies in $\mathcal{H}$ and monotonicity gives $h_* \geq g$ on $[0,1]$. If $x \in (0,1)$, this tangency implies that  $h'_*(x) = g'(x)$, so that condition \ref{sf1} of the lemma is satisfied. An analogous proof shows that if $\sup_n h_n(1) \leq g(x)$ then condition \ref{sf2} of the lemma is satisfied.
    \end{proof}
            
\begin{corollary}[Smooth fit principle]\label{cor:sf}
    If $g$ and $\mathcal{H}$ are differentiable then $V$ is differentiable on $(0,1)$. In particular $V'(x) = g'(x)$ for $x \in S \cap (0,1)$.
\end{corollary}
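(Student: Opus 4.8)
The plan is to fix $x\in(0,1)$ and to treat separately the two cases $x\in C$ and $x\in S$, which partition $(0,1)$ because $V\geq g$ everywhere, so that $C=\{V>g\}$ and $S=\{V=g\}$. Throughout I use $V=w$ (Theorem~\ref{pro:representation_4param}).

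\textbf{Continuation case.} If $g(x)<w(x)$, then by Lemma~\ref{lem:wgV} we have $x^-<x<x^+$, and, exactly as in the proof of Proposition~\ref{pro:vcont}, $V$ agrees on the open interval $(x^-,x^+)$ with the function $h^\circ\in\mathcal{H}$ produced by Lemma~\ref{lem:hcirc}. Since $h^\circ$ is finite at $x^\pm$ we have $(x^-,x^+)\subseteq d(h^\circ)$, so differentiability of $\mathcal{H}$ makes $h^\circ$ differentiable on $(x^-,x^+)$; as $V$ coincides with $h^\circ$ on this neighbourhood of $x$, $V$ is differentiable at $x$. This case is immediate.

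\textbf{Stopping case.} If $V(x)=g(x)$, then since $x\in(0,1)$ Lemma~\ref{lem:charS} provides $h\in\mathcal{H}$ with $h\geq g$ on $[0,1]$, $h(x)=g(x)$ and $h'(x)=g'(x)$ — the derivative being two-sided because $x$ is interior to $d(h)$ and $\mathcal{H}$ is differentiable. As $h\in\mathcal{H}$ dominates $g$, the representation \eqref{eq:repV} together with $g\leq V=w$ gives $g\leq V\leq h$ on $[0,1]$, with equality at $x$. Subtracting the common value $g(x)=V(x)=h(x)$ and dividing by $u-x$ gives, for $u>x$,
\[
\frac{g(u)-g(x)}{u-x}\;\leq\;\frac{V(u)-V(x)}{u-x}\;\leq\;\frac{h(u)-h(x)}{u-x},
\]
and both outer quotients converge to $g'(x)=h'(x)$ as $u\downarrow x$, so $V$ has right derivative $g'(x)$ at $x$; for $u<x$ the division reverses the inequalities and $u\uparrow x$ gives left derivative $g'(x)$. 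Hence $V$ is differentiable at $x$ with $V'(x)=g'(x)$.

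Combining the two cases proves differentiability of $V$ on $(0,1)$, and the identity $V'=g'$ on $S\cap(0,1)$ falls out of the stopping case. The \emph{only} real content is the stopping case, and even there the hard work — the compactness and uniform-equicontinuity argument producing the tangent $h$ — has already been carried out in Lemma~\ref{lem:charS}; given that lemma, the corollary is the elementary two-sided squeeze above. The single point deserving a line of care is that $h'(x)$ in Lemma~\ref{lem:charS} is a two-sided (not merely one-sided) derivative, which holds because the $h$ constructed there keeps $x$ strictly inside its domain.
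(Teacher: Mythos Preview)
Your proof is correct and follows essentially the same approach as the paper: for $x\in C$ you inherit differentiability from $h^\circ$, and for $x\in S\cap(0,1)$ you sandwich $V$ between $g$ and the tangent $h$ from Lemma~\ref{lem:charS}. Your version is slightly more explicit than the paper's in separating the $u>x$ and $u<x$ sides of the squeeze, and in noting that $x$ lies strictly inside $d(h)$ so that $h'(x)$ is two-sided, but the argument is the same.
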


\begin{proof}
    For each $x \in S \cap (0,1)$, taking $h$ as in Lemma \ref{lem:charS} with $h(x)=g(x)$, $h'(x)=g'(x)$ and $h \geq g$ we have $h(x)=g(x)=V(x)$, so Theorem \ref{pro:representation_4param} gives $g(x) \leq V(x) = w(x) \leq h(x)$ and     
    \begin{align*}
    g'(x) & = \lim_{\epsilon \to 0}\frac{g(x+\epsilon) - g(x)}{\epsilon} \leq
    \lim_{\epsilon \to 0} \frac{V(x+\epsilon) - V(x)}{\epsilon} \\ & \leq
    \lim_{\epsilon \to 0} \frac{h(x+\epsilon) - h(x)}{\epsilon} = h'(x) = g'(x),
    \end{align*}
    so that $V'(x) = g'(x)$. 
    
    For each connected component $C_*$ of $C$ and each $x_* \in C_*$, let $x_*^-,x_*^+$ be defined as in \eqref{x_minus}--\eqref{x_plus}. Then $g(x_*^\pm)=V(x_*^\pm)$ (Lemma \ref{lem:parti}) and for any $x_*^- < x < x_*^+$ we have $x^\pm = x_*^\pm$. Then for $h^\circ$  defined as in Lemma \ref{lem:hcirc}, we have $V(x) = h^\circ(x) = h^{x^-,x^+}_{g(x^-),g(x^+)}(x) = h^{x_*^-,x_*^+}_{g(x_*^-),g(x_*^+)}(x) =: h_*^\circ (x)$. Thus $V = h_*^\circ$ on $C_*$, hence the differentiability of $V$ on $C_*$ follows immediately from that of $\mathcal{H}$ (Assumption \ref{ass:alg}).
\end{proof}

\subsection{Solution algorithm}
\label{subsec:algorithm}
The following algorithm aims to construct the value function globally by returning the set of continuation regions - that is, the connected components of the continuation region $C$. It exploits the fact, due to the smooth fit principle (Corollary \ref{cor:sf}) and the proof of Theorem \ref{pro:representation_4param}, that for each $x \in C$ the value function is tangent to the gain function at both $x^-$ and $x^+$. Then since $V = h^{x^-,x^+}_{g(x^-),g(x^+)}$ on $\xpm$, this double tangency can be used to search for $x^-$ and $x^+$. The computational cost is thus on the order of a search over $[0,1]^2$. In order to terminate in finite time with the correct result, we make the following additional assumptions:

\begin{assumption}\label{ass:alg}
\begin{enumerate}
    \item $\delta > 0$ is strictly smaller than the minimum length of the connected components of $C$, \label{aa2}
    \item the gain function $g$ and $\mathcal{H}$ are differentiable, \label{aa3}
    \item (strong monotonicity) if $X,Y \in b\mathcal{F}$ with $X \leq Y$ a.s.\ and $\PP(X < Y) > 0$ then $\rho(X) < \rho(Y)$. \label{aa4}
\end{enumerate}
\end{assumption}

Recall from the proof of Theorem \ref{pro:representation_4param} that if $x \in C$ then 
there exist $0 \leq x^- < x < x^+ \leq 1$ such that $V = h^{x^-,x^+}_{g(x^-),g(x^+)}$ on $\xpm$. Denoting by $h'^+$, $h'^-$ the right and left derivatives of $h$ respectively, we obtain the following lemma.
\begin{lemma}\label{lem:fulls}
Setting
\begin{align*}
F(x) &:= \big\{(y,z): 0 \leq y < x < z \leq 1, \; h = h^{y,z}_{g(y),g(z)} \text{ satisfies } \\  
& \qquad y(h'^+(y) - g'(y))=(1-z)(h'^-(z) - g'(z))=0\big\}, \\        
    G(x) &= \sup\{h^{y,z}_{g(y),g(z)}(x): (y,z) \in F(x)\}, \\
    \tilde{F}(x) &= \arg\max\{h^{y,z}_{g(y),g(z)}(x): (y,z) \in F(x)\},
\end{align*}
(with the convention that $\sup \emptyset = -\infty$) we have:
\begin{enumerate}
    \item $x \in S$ iff  $G(x) \leq g(x)$, \label{al1}
    \item $x \in C$ iff  $G(x) > g(x)$ and then 
    \begin{gather*}    
    x^- = \sup\{y: (y,z) \in \tilde{F}(x)\}, \quad  
    x^+ = \inf\{z: (y,z) \in \tilde{F}(x)\}, \\
    V = h^{x^-,x^+}_{g(x^-),g(x^+)} \text{ on } \xpm.
\end{gather*} \label{al2}
\end{enumerate}
\end{lemma}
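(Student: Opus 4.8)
The plan is to derive the lemma from Theorem~\ref{pro:representation_4param}, the smooth fit principle (Corollary~\ref{cor:sf}) and two comparison arguments based on the $\varrho$-martingale property.

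The first ingredient is a comparison inequality: for \emph{every} $0 \le y < x < z \le 1$ one has $h^{y,z}_{g(y),g(z)}(x) \le V(x)$. To prove this, fix such $(y,z)$, write $h := h^{y,z}_{g(y),g(z)}$, and let $\bar h = h^{\bar y,\bar z}_{\bar\beta,\bar\gamma} \in \mathcal{H}$ with $\bar h \ge g$ on $[0,1]$; assume $x \in \overline{d(\bar h)}$, since otherwise $\bar h(x) = \infty$. Set $a := y \vee \bar y$ and $b := z \wedge \bar z$, so that $a < x < b$. By Lemma~\ref{lem:mgale} together with optional sampling (Proposition~\ref{pro:optional_stopping}), both $h$ and $\bar h$ are $\varrho$-martingales on $[a,b]$, so $h(x) = \rho^x(h(X_{\tau_{a,b}}))$ and $\bar h(x) = \rho^x(\bar h(X_{\tau_{a,b}}))$ with $X_{\tau_{a,b}} \in \{a,b\}$ $\PP^x$-almost surely. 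At each $v \in \{a,b\}$ one has $\bar h(v) \ge h(v)$: if $v \in \{y,z\}$ this holds because $h(v) = g(v) \le \bar h(v)$, while if $v$ is a strict endpoint of $d(\bar h)$ then $v \in (0,1)$, the definition of $\mathcal{H}$ forces $\bar h(v) > \bar g + 1$, and the maximum principle (part~\ref{ph2} of Lemma~\ref{lem:hprops}) gives $h(v) \le \bar g$. Monotonicity of $\rho^x$ now yields $h(x) \le \bar h(x)$, and taking the infimum over admissible $\bar h$ gives $h(x) \le w(x) = V(x)$. Since $F(x)$ consists of pairs with $0 \le y < x < z \le 1$, it follows that $G(x) \le V(x)$ for all $x$.

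Next I would record that $x \in C$ implies $(x^-,x^+) \in \tilde F(x)$ with $h^{x^-,x^+}_{g(x^-),g(x^+)}(x) = V(x)$. Indeed, when $w(x) > g(x)$ Lemma~\ref{lem:wgV} gives $x^- < x < x^+$, the proof of Theorem~\ref{pro:representation_4param} gives $V = h^{x^-,x^+}_{g(x^-),g(x^+)}$ on $\xpm$, and Corollary~\ref{cor:sf} gives that $V$ is differentiable on $(0,1)$ with $V' = g'$ on $S \cap (0,1)$; since $x^\pm \in S$, the function $h^{x^-,x^+}_{g(x^-),g(x^+)}$ satisfies the tangency conditions defining $F(x)$ (the constraint at $x^-$ being vacuous when $x^- = 0$ and reading $h'^+(x^-) = V'(x^-) = g'(x^-)$ when $x^- > 0$, and likewise at $x^+$), so $(x^-,x^+) \in F(x)$, and its value at $x$ is $V(x) = w(x) > g(x)$. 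Combining this with the comparison inequality: if $x \in S$ then $G(x) \le V(x) = g(x)$, whereas if $x \in C$ then $G(x) \ge V(x) > g(x)$ and in fact $G(x) = V(x)$, so $(x^-,x^+) \in \tilde F(x)$. Since $S$ and $C$ partition $[0,1]$, this establishes parts~\ref{al1} and~\ref{al2} apart from the formulae for $x^\pm$.

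Finally, for $x \in C$ I would identify $x^\pm$ by showing that every $(y,z) \in \tilde F(x)$ satisfies $y \le x^-$ and $z \ge x^+$; together with $(x^-,x^+) \in \tilde F(x)$ this yields $x^- = \sup\{y : (y,z) \in \tilde F(x)\}$ and $x^+ = \inf\{z : (y,z) \in \tilde F(x)\}$, while the identity $V = h^{x^-,x^+}_{g(x^-),g(x^+)}$ on $\xpm$ is the one recalled above. Suppose, for a contradiction, that $(y,z) \in \tilde F(x)$ with $y > x^-$ (the case $z < x^+$ being symmetric); write $h := h^{y,z}_{g(y),g(z)}$, $h^* := h^{x^-,x^+}_{g(x^-),g(x^+)}$ and $b := z \wedge x^+$. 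On $[y,b] \subseteq [y,z] \cap [x^-,x^+]$ both $h$ and $h^*$ are $\varrho$-martingales, so $h(x) = \rho^x(h(X_{\tau_{y,b}}))$ and $h^*(x) = \rho^x(h^*(X_{\tau_{y,b}}))$. Since $y \in (x^-,x^+) \subseteq C$ we have $h^*(y) = V(y) = w(y) > g(y) = h(y)$, and at the other endpoint $h^*(b) \ge h(b)$ (by the comparison inequality applied at $x^+$, or the equality $h(z) = g(z)$, when $b = x^+$; and because $z \in C$ when $b = z < x^+$). As $X$ started from $x \in (y,b)$ hits $y$ with positive $\PP^x$-probability, part~\ref{aa4} of Assumption~\ref{ass:alg} (strong monotonicity) forces $h(x) < h^*(x) = V(x)$, contradicting $h(x) = V(x)$. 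This last step is the main obstacle: ensuring that the maximisers of $\tilde F(x)$ pin down $x^\pm$ exactly, rather than merely bracketing them, needs the strong monotonicity assumption and a careful analysis of the position of $[y,z]$ relative to $[x^-,x^+]$, and the boundary cases $y = 0$ and $z = 1$ in the comparison inequality also require care.
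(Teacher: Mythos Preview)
Your proof is correct and follows essentially the same route as the paper's: the comparison $h^{y,z}_{g(y),g(z)}(x)\le V(x)$, membership $(x^-,x^+)\in\tilde F(x)$ via the smooth fit principle, and the use of strong monotonicity (part~\ref{aa4} of Assumption~\ref{ass:alg}) to exclude other maximisers. Two minor organisational differences are worth noting. First, the paper obtains the comparison inequality in one line from the definition of $V$, since $h^{y,z}_{g(y),g(z)}(x)=\rho^x(g(X_{\tau_{y,z}}))$ is simply a particular value of the supremum defining $V(x)$; your detour through $w$ and an arbitrary $\bar h\in\mathcal{H}$ is valid but unnecessary. Second, in the exclusion step the paper assumes $z<x^+$ and splits according to whether $y>x^-$ or $y\le x^-$, whereas you assume $y>x^-$ and absorb the position of $z$ into $b=z\wedge x^+$; the underlying contradiction via the $\varrho$-martingale property and strong monotonicity is identical.
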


\begin{proof}
For part \ref{al1} note that if $x \in C$ then $G(x)=V(x) > g(x)$, giving the implication $G(x) \leq g(x) \implies x \in S$. Conversely, if $x \in S$ then by optimality we have $G(x) \leq g(x) = V(x)$.

For part \ref{al2}, by the proof of Theorem \ref{pro:representation_4param} and the smooth fit principle we have $(x^-, x^+) \in \tilde{F}(x)$, and it is straightforward to show that $x^- = \sup\{y: (y,z) \in \tilde{F}(x)\}\}$ and $x^+ = \inf\{z: (y,z) \in \tilde{F}(x)\}\}$, as follows (we show only the latter). Suppose that $(y,z) \in \tilde{F}(x)$. Then if $z < x^+$ we have either $y > x^-$ or $y \leq x^-$. 

If $x^- < y < x < z < x^+$, by construction we have $\min\{V(y)-g(y),V(z)-g(z)\}>0$ and monotonicity and the $\varrho$-martingale property give 
\[
h^{y,z}_{g(y),g(z)}(x) < h^{y,z}_{V(y),V(z)}(x) =
V(x) = h^{x^-,x^+}_{g(x^-),g(x^+)}(x),
\]
contradicting the definition of $\tilde{F}(x)$. 

Supposing alternatively that $y \leq x^- < x < z < x^+$, we again have $V(z)>g(z) = h^{y,z}_{g(y),g(z)}(z)$. 
Now if $h^{y,z}_{g(y),g(z)}(x^-) \leq V(x^-)$ then monotonicity and part \ref{aa3} of Assumption \ref{ass:alg} give $h^{y,z}_{g(y),g(z)}(x) < V(x)$, again contradicting the definition of $\tilde{F}(x)$. Thus $h^{y,z}_{g(y),g(z)}(x^-) > V(x^-) = g(x^-)$, which contradicts the definition of $V$. We conclude that $z \geq  x^+$.
\end{proof}
Lemma \ref{lem:fulls} is the basis for Algorithm \ref{alg:2}.

\begin{algorithm}[!h]
\caption{Under Assumptions \ref{continuity_assumption} and \ref{ass:alg} this algorithm solves the optimal stopping problem by returning the set of connected components of the continuation region $C$.}
\label{alg:2}
\begin{algorithmic}
\State Evaluate 
\begin{align*}
F &:= \big\{(y,z): 0 \leq y < z \leq 1, \; h = h^{y,z}_{g(y),g(z)} \text{ satisfies } \\ 
& \qquad y(h'^+(y) - g'(y))=(1-z)(h'^-(z) - g'(z))=0\big\},
\end{align*}
\State Set $x = 0$, $B = \emptyset$
\While{$x \leq 1$}
\State Set
\begin{align*}
F(x) &:= \{(y,z) \in F: y < x < z \}, \\  
    G(x) &= \sup\{h^{y,z}_{g(y),g(z)}(x): (y,z) \in F(x)\}, \\
    \tilde{F}(x) &= \arg\max\{h^{y,z}_{g(y),g(z)}(x): (y,z) \in F(x)\},\\
    x^- &= \sup\{y: (y,z) \in \tilde{F}(x)\}, \\ 
    x^+ &= \inf\{z: (y,z) \in \tilde{F}(x)\}, \label{eq:msf}
\end{align*}
\If{$G(x) > g(x)$} 
    \State Append the element $(x^-, x^+)$ to $B$
    \State Set $x = x^+$
\EndIf
\State Increase $x$ by $\delta$
\EndWhile
\State \Return $B$
\end{algorithmic}
\end{algorithm}

\begin{remark}\label{rem:ctsfit}
    Part \ref{aa2} of Assumption \ref{ass:alg} ensures that the continuation region $C$ has finitely many (at most $\lfloor 1/\delta \rfloor$) connected components and, thus, that Algorithm \ref{alg:2} terminates in finite time. Since the assumptions made in this section are strong, it is worth remarking on the output of Algorithm \ref{alg:2} when Assumptions \ref{continuity_assumption} and \ref{ass:alg} do not hold. The algorithm identifies the points $x^+,x^-$ defined in \eqref{x_minus}--\eqref{x_plus} using the principle of smooth fit. It may be that for certain initial points $x_0$, the principle of smooth fit applies in a sufficiently large neighbourhood of $x_0$. An example is provided in Section \ref{sec:wcrevis} below using the worst-case risk mapping. In this case it is not difficult to see that Algorithm \ref{alg:2} correctly returns the value $V(x_0)$ and the optimal stopping time $\tau^*$ for initial points $x_0 \in (\frac 1 8, \frac 5 8)$. However for initial points $x_0 \in (\frac 3 4, 1)$, the lack of smooth fit at $x = \frac 3 4$ prevents the component $(\frac 1 8, \frac 5 8)$ of $C$ from being identified.

    Nevertheless the characterisation \eqref{x_minus}--\eqref{x_plus} of the points $x^+,x^-$ uses a  weaker property than smooth fit, namely {\it continuous fit} between $g$ and $V$. Since formula \eqref{eq:guessv} correctly constructs $V$ under the worst-case risk mapping, it seems possible to modify Algorithm \ref{alg:2} to use continuous fit instead to identify the points $x^+,x^-$. While outside the scope of the present paper, such a modification would enable the algorithm to solve the example of Section \ref{sec:wcrevis} for all $x \in C$.
\end{remark}

\section{Examples}
\label{sec:gt}

In this section we consider optimal stopping under three different risk mappings $\rho$. For a unified framework, consider the construction
\begin{equation}
\label{dual}
\rho(Z) := \inf_{Q\ll \PP} \left( \E^Q[Z]+\alpha(Q) \right), \qquad Z \in b\mathcal{F},
\end{equation}
where for each $Q \ll \PP$, $\alpha(Q) \in [0,\infty]$ is a `penalty' associated with $Q$ such that $\inf_{Q \ll \PP}\alpha(Q)=0$, so that $\rho$ is indeed a risk mapping (which is also concave, see e.g. \cite[p. 207]{Follmer_Schied}). Thus \eqref{dual} may be used directly to construct a risk mapping by specifying the penalty function $\alpha$, and this is the approach which will be taken below. It is interesting to note that further examples of risk mappings of the form \eqref{dual} can be constructed from the solutions to backwards stochastic differential equations (BSDEs, see for example \cite{barrieu2009pricing,chong2019ergodic}), and such examples are time consistent by construction.

Section \ref{subsec:linear} recovers the classical linear solution of \cite[Ch.\ III.7]{Dynkin_Yushkevich} and Section \ref{sec:wcrevis} revisits the worst-case risk mapping of Section \ref{sec:motiv}, while Section \ref{subsec:entropic} addresses the entropic risk mapping. Unless otherwise stated, we take the underlying diffusion $Y$ to be standard Brownian motion.

\subsection{Linear case}
\label{subsec:linear}
Setting $\alpha(Q)=0$ for $Q=\PP$ and $\alpha(Q)=+\infty$ otherwise gives the linear expectation $\rho(Z) = {\E}[Z]$, which has the corresponding dynamic risk mapping
\begin{align*}
    \rho_t(Z) = {\E}[Z\vert \mathcal{F}_t], \qquad t\geq 0.
\end{align*}
In this case, $\mathcal{H}$ consists of line segments:
\begin{equation}\label{h_beta_gamma_linear}
    h^{y,z}_{\beta,\gamma}(x) = \beta \PP(\tau_y^x<\tau_z^x) + \gamma \PP(\tau_z^x<\tau_y^x) = \beta \frac{z-x}{z-y} + \gamma \frac{x-y}{z-y}, \qquad x \in [y,z],
\end{equation}
(see for example \cite[Th.\ 30]{Serfozo}). Thus $\mathcal{H}$ is differentiable, and $\Delta$-equicontinuous by Lemma \ref{lem:diffislec}. Also, $\varrho$ is continuous by the conditional bounded convergence theorem for $\E$ and time consistent by its conditional tower property. 
The strong Markov property (Definition \ref{def:mp}) is trivially satisfied as Brownian motion is a strong Markov process under $\E$.

Thus Assumption \ref{continuity_assumption} holds, and Theorem \ref{pro:representation_4param} gives that if $g$ is continuous then $V=w$ (cf. \eqref{eq:repV}). It is straightforward to check that $w$ is then equal to the pointwise infimum over those $h \in \mathcal{H}^\dagger$ (cf. \eqref{eq:hdag}) which dominate $g$ or, equivalently, that $w$ is the smallest concave majorant of $g$, and we recover the classical geometric method for optimal stopping problems \cite[Ch.\ III.7]{Dynkin_Yushkevich}.

More generally if $X^x$ is a one-dimensional diffusion, stopped as in Section \ref{sec:setting}, then we recover the result of Dayanik and Karatzas for undiscounted optimal stopping on a bounded interval (see \cite{Dayanik_Karatzas}). To see this write $Z_t^{S(x)}=S(X_t^x)$, where $S$ is the scale function of $X^x$, and note from part \ref{def:hcal1} of Definition \ref{def:hcal} that the set $\mathcal{H}$ of generalised potentials depends implicitly on the law of the process $X^x$. Thus, for clarity we may denote by $\mathcal{H}_X$ and $\mathcal{H}_Z$ the sets of generalised potentials induced respectively by the processes $X^x$ and $Z^{S(x)}=S(X^x)$. According to \cite[Prop.\ 2.2]{Dayanik_Karatzas}, the elements of $\mathcal{H}_X$ take the form
\begin{equation}\label{eq:hx}
    h^{y,z}_{\beta,\gamma}(x) = \beta \frac{S(z)-S(x)}{S(z)-S(y)} + \gamma \frac{S(x)-S(y)}{S(z)-S(y)}, \qquad x \in [y,z].
\end{equation}
Then since $X^x=S^{-1}\left(Z^{S(x)}\right)$, it follows from the definition of $h^{y,z}_{\beta,\gamma}$ via hitting probabilities (Definition \ref{def:hcal})
that the elements of $\mathcal{H}_Z$ take the form
\begin{equation*}
    h^{y,z}_{\beta,\gamma}(x) = \beta \frac{z-x}{z-y} + \gamma \frac{x-y}{z-y}, \qquad x \in [y,z],
\end{equation*}
which is the same as $\eqref{h_beta_gamma_linear}$, and the conditions of Theorem \ref{pro:representation_4param} are again satisfied. Rewriting problem \eqref{optimal_stoping_intro} as 
$V(x)=\sup_{\tau\in\mathscr{T}} \E\left[ g\circ S^{-1}\left(Z^{S(x)}_\tau \right)\right]$, we conclude that its solution is the smallest  concave majorant of $g\circ S^{-1}$, which is \cite[Prop.\ 3.3]{Dayanik_Karatzas}.

\subsection{Worst-case risk mapping}
\label{sec:wcrevis}
As discussed for example in \cite{denis2011function}, where further references can be found, many `worst case' risk mappings are naturally associated with settings involving ambiguity: for example, when the drift function $\mu$ or the volatility function $\sigma$ is ambiguous. These respective settings can be treated in two frameworks due to Peng: the former using the $g-$expectation \cite{peng1997backward} (see also \cite{chen2002ambiguity}),
and the latter with $G-$Brownian motion and $G-$expectation \cite{peng2007g, peng2007gb}. In this section we consider a definition of `worst case' which is not related to ambiguity: namely, when the drift and volatility functions are known and the essential infimum of the payoff is considered (this was the nonlinear expectation considered in the example of Section \ref{sec:motiv}). This can be constructed by setting 
$\alpha(Q)=0$ in \eqref{dual} for all $Q\ll\PP$, giving $\rho(Z) = \PP - \essinf (Z)$
\cite[Remark 3.7]{barron2000radon}. Although the worst-case risk mapping does not satisfy Assumption \ref{continuity_assumption}, it was observed in Section \ref{sec:motiv} that formula \eqref{eq:guessv} generates the correct solution and, as such, is a useful heuristic. This heuristic is novel to the best of our knowledge: for example, it differs from the use of the simpler set $\mathcal{H}^\dagger$ (cf. \eqref{eq:hdag}). Indeed it is easy to see that if $\mathcal{H}$ is replaced by $\mathcal{H}^\dagger$ in \eqref{eq:repV}, instead of $w$ we obtain the constant function $w^\dagger(x) \equiv \bar g$, and for all $x \notin \arg\sup_{y \in [0,1]}g(y)$ we have
\[
V(x) = w(x) = \min\{\sup_{y \in [0,x]}g(y), \sup_{y \in [x,1]}g(y)\} \neq w^\dagger(x) = \bar g.
\]
Formula \eqref{eq:guessv} also differs from the heuristic use of the principle of smooth fit (cf. Corollary \ref{cor:sf}). This can be seen in Figure \ref{fig:4}, where the gain function is $g(x) = 1 + \sin (4 \pi x)$ and, as observed in Remark \ref{rem:ctsfit}, the use of smooth fit fails for initial points $x_0 \in (\frac 3 4, 1)$.

\subsection{Entropic case}
\label{subsec:entropic}
Setting $Y := \frac{\ud Q}{\ud \PP}$ and $\alpha(Q)=\E^Q\left[ \ln Y \right]$ in \eqref{dual}, one obtains the entropic risk mapping \cite[Ex. 4.34]{Follmer_Schied}:
\begin{align*}
    \rho(Z) &= - \ln \E[e^{-Z}], \qquad Z \in b\mathcal{F},
\end{align*}
which has the corresponding dynamic conditional risk mapping 
\begin{align*}
    \rho_t(Z) &= - \ln \E[e^{-Z} \vert \mathcal{F}_t], \qquad t \geq 0.
\end{align*}
(For an alternative construction of the entropic risk mapping via quadratic BSDEs see, for example, \cite[Proposition 6.4]{barrieu2009pricing}, \cite{chong2019ergodic, delbaen2011uniqueness}).
From \eqref{h_beta_gamma_linear}, for $x \in [y,z]$ we have $h_{\beta,\gamma}^{y,z}(x) 
= -\ln \left( e^{-\beta}\frac{z-x}{z-y} + e^{-\gamma}\frac{x-y}{z-y} \right)$, so $\mathcal{H}$ is again differentiable, and $\Delta$-equicontinuous by Lemma \ref{lem:diffislec}. The continuity, strong Markov property and time consistency of $\varrho$ follow from the corresponding properties for $\E$ as in Section \ref{subsec:linear}, so Assumption \ref{continuity_assumption} holds and Theorem \ref{pro:representation_4param} gives that if $g$ is continuous then $V=w$. 

In this case it is a priori equivalent to rescale the problem and instead maximise the linear expectation $\E[e^{-g(X_\tau^x)}]$, for which the classical construction using $\mathcal{H}^\dagger$ suffices. Interestingly, though, the direct construction of the present paper is qualitatively different, in the sense that the set $\mathcal{H}$ must again be used rather than $\mathcal{H}^\dagger$. Indeed, fix $x \in (0,1)$ and let $g$ be any gain function with $g(0)=1$ and $g(x) > 2 -\ln (1-x)$. We have $h_{0,\gamma}^{0,1}(x) \uparrow -\ln (1-x)$ as $\gamma \uparrow \infty$.  Then if $\mathcal{H}^\dagger$ is used to construct $w^\dagger$ (rather than using $\mathcal{H}$ to construct $w$), by part \ref{ph4} of Lemma \ref{lem:hprops} (translation invariance) we would have $w^\dagger(0) \geq 2 > 1 = w(0) = V(0) = g(0)$, so $w^\dagger$ cannot be the value function.

\subsection{\bl{A sufficient condition for Algorithm \ref{alg:2}}}
\bl{
We conclude by providing an example problem for which Assumption \ref{ass:alg} holds and, thus, Algorithm \ref{alg:2} may be applied. Our focus is on part \ref{aa2} of Assumption \ref{ass:alg}, since the remaining parts are straightforward. 

 Suppose that each $h \in \mathcal{H}$ is a 
 convex function on $d(h)$. Examples include the linear case of Section \ref{subsec:linear}, the entropic case of Section \ref{subsec:entropic}, and further examples which can be constructed using the dual representation \eqref{dual}, as follows. From \eqref{eq:hx}, the potentials $h \in \mathcal{H}_X$ are convex precisely when the scale function $S$ of $X^x$ is convex. (This holds, for example, when $X^x$ has negative drift.) Let $\mathcal Q$ be a finite set of measures $Q \ll \PP$, each corresponding to a regular diffusion with a convex scale function. If the penalty function $\alpha$ is finite on $\mathcal Q$, and is set to $+\infty$ otherwise, the resulting concave risk mapping $\rho$ also has convex potentials $\mathcal{H}$.
 
 Let $C_0=(x_0,y_0)$ be a connected component of the continuation region $C$. Then since $V$ dominates $g$ and is convex on $C_0$ with $V=g$ at its endpoints, $g$ cannot be concave on the whole of $C_0$ (otherwise $V=g$ on $C_0$, a contradiction). Thus $g$ is convex on some subinterval of $C_0$. Hence a sufficient condition for part \ref{aa2} of Assumption \ref{ass:alg} is that $\mathscr{A}$ is finite, where $\mathscr{A}$ is the set of (maximal) intervals on which $g$ is convex, and then we may take $\delta < \min_{[x_1,y_1] \in \mathscr{A}}(y_1-x_1)$. Note that $\delta$ can be found from knowledge of $g$ only.
 }

\begin{acks}[Acknowledgments]
The authors would like to thank Giorgio Ferrari, \bl{Ben Hambly,} Vicky Henderson, David Hobson and Jan Palczewski for comments on earlier versions of this work, and also two anonymous referees whose feedback significantly improved the manuscript.
\end{acks}

\begin{funding}
This work was supported by the Lloyd’s Register Foundation-Alan Turing Institute programme on Data-Centric Engineering under the LRF grant G0095.
\end{funding}

%% if your bibliography is in bibtex format, uncomment commands:
\bibliographystyle{imsart-number} f Style BST file (imsart-number.bst or imsart-nameyear.bst)
\bibliography{bibliography}       % Bibliography file (usually '*.bib')

%% or include bibliography directly:
% \begin{thebibliography}{}
% \bibitem{b1}
% \end{thebibliography}

\end{document}